\def\<{\langle}
\def\>{\rangle}
\def\g{\mathfrak g}
\newtheorem{thm}{Theorem}[section]
\newtheorem{prop}[thm]{Proposition}
\newtheorem{lem}[thm]{Lemma}
\newtheorem{rmk}[thm]{Remark}
\newtheorem{definition}[thm]{Definition}
\begin{document}

\begin{center}
{\Large \bf  Some categories of modules for toroidal Lie algebras }
\end{center}

\begin{center}
{Hongyan Guo, Shaobin Tan\footnote{Partially supported by NSF of
China (No.10931006) and a grant from the PhD Programs Foundation of
Ministry of Education of China (No.20100121110014).} and Qing
Wang\footnote{Partially supported by NSF of China (No.11371024),
Natural Science Foundation of Fujian Province (No.2013J01018) and
Fundamental Research Funds for the Central
University (No. 2013121001).}\\
School of Mathematical Sciences, Xiamen University, Xiamen 361005,
China}
\end{center}

\begin{abstract}
 In this paper, we use basic formal variable techniques to
study certain categories of modules for the toroidal Lie algebra
$\tau$. More specifically, we define and study two categories
$\mathcal{E}_{\tau}$ and $\mathcal{C}_{\tau}$ of $\tau$-modules
using generating functions, where $\mathcal{E}_{\tau}$ is proved to
contain the evaluation modules while $\mathcal{C}_{\tau}$ contains
certain restricted $\tau$-modules, the evaluation modules, and their
tensor product modules. Furthermore, we classify the irreducible
integrable modules in categories $\mathcal{E}_{\tau}$ and
$\mathcal{C}_{\tau}$.
\end{abstract}

\section{Introduction}
\def\theequation{1.\arabic{equation}}
\setcounter{equation}{0}

 Let $\hat{\mathfrak{g}}={\mathfrak{g}}\otimes {\mathbb{C}}[t,t^{-1}]\oplus
 {\mathbb{C}}{\mathbf{k}}$ be
the untwisted affine Lie algebra of a finite
dimensional simple Lie algebra $\mathfrak{g}$, and let
$\widetilde{\mathfrak{g}}={\mathfrak{g}}\otimes
{\mathbb{C}}[t,t^{-1}]\oplus {\mathbb{C}}{\mathbf{k}}\oplus
 {\mathbb{C}}d$ be the affine Lie algebra with the derivation added.
 The irreducible integrable modules with finite dimensional weight spaces
for $\widetilde{\mathfrak{g}}$ were classified by Chari \cite{C}. In
particular, Chari proved that every irreducible integrable
$\hat{\mathfrak{g}}$-module of level zero with finite dimensional
weight spaces must be a finite dimensional evaluation module.
Furthermore, in [CP], they studied tensor products of modules of
different types which were obtained in \cite{C}. In particular, they
proved that the tensor product of
  an irreducible integrable highest
 $\hat{\mathfrak{g}}$-module and a finite dimensional irreducible
 evaluation
  $\hat{\mathfrak{g}}$-module is irreducible. Such irreducible tensor product modules
 are integrable but with infinite dimensional weight spaces in general. This provides a new family
  of irreducible integrable $\hat{\mathfrak{g}}$-modules.
 Motivated by this, a
canonical characterization for the tensor product modules of this
form was given in \cite{L1}. More specifically, two categories $\mathcal{E}$
and $\mathcal{C}$ of $\hat{\mathfrak{g}}$-modules were defined and
studied by using generating functions and formal variables in
\cite{L1}, where the category $\mathcal{C}$ unifies highest weight
modules, evaluation modules, and their tensor product modules.
Furthermore, all irreducible integrable $\hat{\mathfrak{g}}$-modules
in categories $\mathcal{E}$ and $\mathcal{C}$ were classified in
\cite{L1}.

Toroidal Lie algebras, which are central extensions of multi-loop
Lie algebras, are natural generalizations of affine Kac-Moody Lie algebras.
For toroidal Lie algebras, irreducible integrable modules with
finite-dimensional weight spaces have been classified and
constructed in \cite{R}. It was proved therein that there are two
classes of evaluation modules depending on the
action of the center. One class consists of the evaluation modules arising from
finite dimensional irreducible modules for
 the finite dimensional simple Lie algebra with the full center acting trivially,
 while the other consists of the evaluation modules arising from irreducible integrable highest weight modules for the affine Lie
algebra with a nontrivial action of the center. Then a natural question is whether one can
apply the formal variable technique used \cite{L1} to study various categories of
modules for toroidal Lie algebras. This is the main motivation of this current paper.

In this paper, we define two categories $\mathcal{E}_{\tau}$ and
          $\mathcal{C}_{\tau}$ of modules for the toroidal Lie algebra $\tau$.
It is shown that the category $\mathcal{E}_{\tau}$ contains
evaluation modules. As for the modules in category
$\mathcal{C}_{\tau}$, we need a notion of restricted $\tau$-module,
defined in \cite{LTW}. In \cite{LTW}, a theory of
toroidal vertex algebras and their modules was developed,
 and we associated toroidal vertex algebras and their modules to toroidal Lie algebras,
 and proved that there exists a canonical correspondence between
 restricted modules for toroidal Lie algebras and modules for toroidal vertex
 algebras. In view of this, restricted modules for toroidal Lie algebras are also
 important in the study of representation theory of toroidal vertex
 algebras. The category $\mathcal{C}_{\tau}$ defined in this paper contains the evaluation modules, certain restricted
$\tau$-modules where we denote the corresponding module category by
$\mathcal{\widetilde{R}}$,
 and their tensor product modules.

 Note that the category $\mathcal{\widetilde{R}}$ is a mixed product of the notion of restricted
 modules and the notion of evaluation modules for an affine Lie algebra, which is significantly different from the category $\mathcal{R}$
 of restricted modules for an affine Lie algebra. Due to this, the treatment for the classification of irreducible integrable $\tau$-modules
 in $\mathcal{\widetilde{R}}$ is more complicated than that for the case of the classification of irreducible
  integrable $\hat{\mathfrak{g}}$-modules in $\mathcal{R}$.
 In the later case,  we know from \cite{DLM} that every nonzero restricted integrable $\hat{\mathfrak{g}}$-module is
 a direct sum of irreducible highest weight integrable modules. While in our case with $\mathcal{\widetilde{R}}$,
  we have no such good result for restricted integrable modules for toroidal Lie algebras
  $\tau$. Thus
 a new method is needed to deal with this problem. As our main results, we prove that irreducible integrable $\tau$-modules in the
category $\mathcal{E}_{\tau}$ are exactly the evaluation modules
arising from finite-dimensional irreducible $\mathfrak{g}$-modules
and that irreducible integrable $\tau$-modules in category
$\mathcal{\widetilde{R}}$ are exactly the evaluation modules arising
from irreducible integrable highest weight modules for the affine
Lie algebra. In this way, the two classes of modules which were
classified and constructed in \cite{R} are unified through
irreducible integrable modules in category $\mathcal{C}_{\tau}$.

This paper is organized as follows. In Section 2, we review some
basic notions and facts about the toroidal Lie algebras. In Section 3,
we define the category $\mathcal{E}_{\tau}$ and
classify all irreducible integrable modules in this
category. In Section 4, we define the category
$\mathcal{C}_{\tau}$ and classify all irreducible
integrable modules in $\mathcal{C}_{\tau}$.

\section{Preliminaries}
\label{Sect:V(k,0)}
\def\theequation{2.\arabic{equation}}
\setcounter{equation}{0}

In this section, we recall the definitions of restricted modules and
integrable modules for the toroidal Lie algebra. We also present
some basic facts about them.

Throughout this paper, the symbols $x,x_{0},x_{1},x_{2},\dots$ will
denote mutually commuting independent formal variables. All vector
spaces are considered to be over $\mathbb{C}$, the field of complex
numbers. For a vector space $U$, $U((x))$ is the vector space of
lower truncated integral power series in $x$ with coefficients
 in $U$, $U[[x]]$ is the vector space of nonnegative integral
 power series in $x$ with coefficients in $U$, and
$U[[x,x^{-1}]]$ is the vector space of doubly infinite integral
 power series of $x$ with coefficients in $U$.
 Multi-variable analogues of these vector spaces are defined
  in the same way.

  Recall from \cite{FLM} the formal delta function
  $$\delta(x)=\sum\limits_{n\in \mathbb{Z}}x^{n}\in
  {\mathbb{C}}[[x,x^{-1}]]$$ and the basic property
  $$f(x)\delta\left(\frac{a}{x}\right)=f(a)\delta\left(\frac{a}{x}\right)$$ for
  any $f(x)\in{\mathbb{C}}[x,x^{-1}]$ and for any nonzero complex number
  $a$.

Let $\mathfrak{g}$ be a finite dimensional simple Lie algebra
equipped with the normalized Killing form $\langle\cdot,\cdot \rangle$. Let
$r$ be a positive integer and let $\mathbb{C}[t_{1}^{\pm
1},\dots,t_{r}^{\pm 1}]$ be the algebra of Laurent polynomials in
$r$ commuting variables. For $\underline{m} =
(m_{1},\dots,m_{r})\in\mathbb{Z}^{r}$, set $t^{\underline{m}} =
t_{1}^{m_{1}}\cdots t_{r}^{m_{r}}$. Define a Lie algebra
\begin{eqnarray}
\tau=(\mathfrak{g}\otimes\mathbb{C}[t_{0},
t_{0}^{-1}]\oplus\mathbb{C}K_0)
  \otimes\mathbb{C}[t_{1}^{\pm 1},\dots,t_{r}^{\pm 1}]\oplus\sum\limits_{i=1}^{r}\mathbb{C}K_i,
  \end{eqnarray}
where
\begin{eqnarray}
 [a\otimes
t_{0}^{n_{0}}t^{\underline{n}}, b\otimes
t_{0}^{m_{0}}t^{\underline{m}}]& = & [a,b]\otimes
t_{0}^{n_{0}+m_{0}}t^{\underline{n}+\underline{m}}+n_{0}\langle a,
b\rangle \delta_{n_{0}+m_{0},0}K_{0}\otimes
 t^{\underline{n}+\underline{m}}
                                            \nonumber\\
 &&{} +\langle a, b\rangle \delta_{n_{0}+m_{0},0}
  \delta_{\underline{n}+\underline{m},0}\sum_{i=1}^{r}n_{i}K_{i}
 \label{eq:2.1}\end{eqnarray}
for $a,b\in \mathfrak{g},
 n_{0},m_{0}\in\mathbb{Z},\underline{n},\underline{m}\in\mathbb{Z}^{r},$
 and where $K_0\otimes t^{\underline{m}}$, $K_i$, $i=1,\dots,r$ are
 central. The Lie algebra $\tau$ is the
 toroidal Lie algebra which we consider in this paper. {\em From now on, $\tau$ always stands for
 this particular toroidal Lie algebra}.

 For $a\in
 \g$, we denote $a(n_{0},\underline{n})= a\otimes
t_{0}^{n_{0}}t^{\underline{n}}$. For $K_0$, we denote
$K_0(\underline{n})=K_0\otimes t^{\underline{n}}$. Then the Lie
bracket relation (\ref{eq:2.1}) amounts to

\begin{eqnarray}
 [a(n_{0},\underline{n}),b(m_{0},\underline{m})]& =
& [a,b](n_{0}+m_{0},\underline{n}+\underline{m})+n_{0}\langle a ,
b\rangle \delta_{n_{0}+m_{0},0}K_{0}(\underline{n}+\underline{m})
                                            \nonumber\\
 &&{} +\langle a , b\rangle \delta_{n_{0}+m_{0},0}
  \delta_{\underline{n}+\underline{m},0}\sum_{i=1}^{r}n_{i}K_{i}. \label{eq:2.2}
 \end{eqnarray}
For $a\in \mathfrak{g}$, form a generating function
$$ a(x_{0},\underline{x}) =
\sum_{n_{0}\in \mathbb{Z}}\sum_{\underline{n}\in\mathbb{Z}^{r}}
a(n_{0},\underline{n})x_{0}^{-n_{0}-1}x^{-\underline{n}-1}
     \in\tau[[x_{0}^{\pm 1},x_{1}^{\pm 1},\dots,x_{r}^{\pm 1}]] $$
and set
$$ K_{0}(\underline{x}) = \sum_{\underline{n}\in\mathbb{Z}^{r}}
K_{0}(\underline{n})x^{-\underline{n}-1}\in \tau[[x_{1}^{\pm
1},\dots,x_{r}^{\pm 1}]],$$ where $x^{-\underline{n}-1} =
x_{1}^{-n_{1}-1}\cdots x_{r}^{-n_{r}-1}.$ In terms of the generating
functions, the defining relations (\ref{eq:2.1}) are equivalent to
\begin{eqnarray}
&[a(x_{0},\underline{x}),b(y_{0},\underline{y})]   =
[a,b](y_{0},\underline{y})\prod\limits_{i=0}^{r}y_{i}^{-1}\delta(\frac{x_{i}}{y_{i}})
+\langle a , b\rangle K_{0}(\underline{y}) (\frac{\partial}{\partial
y_{0}}y_{0}^{-1}\delta(\frac{x_{0}}{y_{0}}))
\prod\limits_{i=1}^{r}y_{i}^{-1}\delta(\frac{x_{i}}{y_{i}}) \nonumber\\
&{}+\langle a , b\rangle
x_{0}^{-1}y_{0}^{-1}\delta(\frac{x_{0}}{y_{0}})
\sum\limits_{j=1}^{r}(K_{j}(\frac{\partial}{\partial
y_{j}}y_{j}^{-1}\delta(\frac{x_{j}}{y_{j}}))\prod\limits_{i=1,i\neq
j}^{r}x_{i}^{-1}y_{i}^{-1}\delta(\frac{x_{i}}{y_{i}})).
\label{eq:2.3}
\end{eqnarray}

\begin{definition}\label{t1} A $\tau$-module $W$ is said to be restricted
       if for any $w\in W,a\in \mathfrak{g}$,
       $$a(n_{0},\underline{n})w = 0 $$
       for $n_{0}$ sufficiently large, or equivalently
       $$a(x_{0},\underline{x})w\in
                   W[[x_{1}^{\pm 1},\dots,x_{r}^{\pm 1}]]((x_{0})).$$
\end{definition}

Let $\mathfrak{h}$ be a Cartan subalgebra of $\g$ and let
$\underline{\mathfrak{h}}$ be the linear span of $\mathfrak{h}$,
$K_{0}(\underline{n})$ and $K_{i}$ for
$\underline{n}\in\mathbb{Z}^{r}, 1\leq i\leq r$. We denote
by $\underline{\mathfrak{h}}^{*}$ the dual space of
$\underline{\mathfrak{h}}$. Let $\Delta$ be the root system
of $\mathfrak{g}$ and for each root $\alpha\in\Delta$, we fix a nonzero root vector $x_{\alpha}$.

\begin{definition}\label{} A $\tau$-module $W$  is called integrable if

    (i)  $W = \bigoplus_{\lambda\in\underline{\mathfrak{h}}^{\ast}} W_{\lambda}$,
    where $W_{\lambda} = \{w\in W\mid hw = \lambda(h)w,\forall h \in
    \underline{\mathfrak{h}}\}$.

    (ii) for any $\alpha\in \Delta$, $m_{0}\in\mathbb{Z}$, $\underline{m}\in \mathbb{Z}^{r}$
    and $w\in W$, there exists a nonnegative integer $k = k(\alpha,m_{0},\underline{m},w)$
    such that $(x_{\alpha}(m_{0},\underline{m}))^{k}w = 0$.
\end{definition}

The following lemma is similar to Lemma 2.3 in \cite{L1}.

\begin{lem}\label{lem:2.3}  There exists a basis $\{a_{1},\dots,a_{l}\}$ of
$\mathfrak{g}$ such that
$[a_{i}(n_{0},\underline{n}),a_{i}(m_{0},\underline{m})] = 0$ for
$1\leq i\leq
l,m_{0},n_{0}\in\mathbb{Z},\underline{m},\underline{n}\in\mathbb{Z}^{r}$,
and all $a_{i}(m_{0},\underline{m})$ act locally nilpotently on every
integrable $\tau$-module.
\end{lem}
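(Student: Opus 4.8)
The plan is to reduce both requirements of the lemma to a single feature of the basis and then exhibit such a basis explicitly. Specializing the bracket (\ref{eq:2.2}) to $a=b=a_i$ and using $[a_i,a_i]=0$, the relation $[a_i(n_0,\underline n),a_i(m_0,\underline m)]=0$ holds for all indices if and only if $\langle a_i,a_i\rangle=0$; thus the commutativity requirement is precisely the condition that every basis vector be isotropic for the normalized Killing form. Nilpotent elements are automatically isotropic: if $a$ is nilpotent then $\mathrm{ad}(a)$ is nilpotent and $\langle a,a\rangle=\mathrm{tr}(\mathrm{ad}(a)^2)=0$. I will therefore build a basis consisting of nilpotent elements, each lying in the adjoint-group orbit of a root vector; the latter refinement is what lets me transport part (ii) of the definition of integrable module, which a priori only controls the operators $x_\alpha(m_0,\underline m)$, to the whole basis. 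The local-nilpotency requirement is the substantive part.

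To construct the basis concretely, I take all root vectors $x_\alpha$ ($\alpha\in\Delta$), which span $\bigoplus_{\alpha\in\Delta}\g_\alpha$, together with the elements $v_i=\exp(\mathrm{ad}\,x_{-\alpha_i})(x_{\alpha_i})$, one for each simple root $\alpha_i$. Each $v_i$ lies in the orbit of $x_{\alpha_i}$, hence is nilpotent, hence isotropic by invariance of $\langle\cdot,\cdot\rangle$ under automorphisms. A short $\mathfrak{sl}_2$-computation inside $\langle x_{\alpha_i},[x_{\alpha_i},x_{-\alpha_i}],x_{-\alpha_i}\rangle$ shows that $v_i$ is congruent to a nonzero multiple of $h_{\alpha_i}:=[x_{\alpha_i},x_{-\alpha_i}]$ modulo $\bigoplus_{\alpha}\g_\alpha$; since these $\h$-components are linearly independent, the set $\{x_\alpha\}_{\alpha\in\Delta}\cup\{v_i\}_i$ is linearly independent of cardinality $|\Delta|+\dim\h=\dim\g$, i.e.\ a basis of $\g$.

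It remains to transfer local nilpotency from $x_{\alpha_i}(m_0,\underline m)$ to $v_i(m_0,\underline m)$. Fix an integrable module $W$. By part (ii) of the definition, $x_{-\alpha_i}(0,\underline 0)$ acts locally nilpotently on $W$, so $U:=\exp\big(x_{-\alpha_i}(0,\underline 0)\big)$ is a well-defined invertible operator on $W$. The key point is that conjugation by $U$ implements the genuine inner automorphism $\exp(\mathrm{ad}\,x_{-\alpha_i})$ on each homogeneous component $\g\otimes t_0^{m_0}t^{\underline m}$: iterating $\mathrm{ad}\big(x_{-\alpha_i}(0,\underline 0)\big)$ on $a(m_0,\underline m)$ yields $\big(\mathrm{ad}(x_{-\alpha_i})^k a\big)(m_0,\underline m)$ with \emph{no} central contributions, because the prefactor $n_0$ and the constraint $\underline n=\underline 0$ force the $K_0$- and $K_i$-terms of (\ref{eq:2.2}) to vanish when the bracket is taken against a degree-$(0,\underline 0)$ element. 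As $\mathrm{ad}(x_{-\alpha_i})$ is nilpotent on $\g$, these iterates vanish for large $k$, so Hadamard's lemma gives, as operators on $W$,
\begin{equation*}
U\,x_{\alpha_i}(m_0,\underline m)\,U^{-1}=\big(\exp(\mathrm{ad}\,x_{-\alpha_i})\,x_{\alpha_i}\big)(m_0,\underline m)=v_i(m_0,\underline m).
\end{equation*}
Since $x_{\alpha_i}(m_0,\underline m)$ is locally nilpotent and $U$ is invertible, for any $w\in W$ choosing $N$ with $\big(x_{\alpha_i}(m_0,\underline m)\big)^N(U^{-1}w)=0$ yields $\big(v_i(m_0,\underline m)\big)^N w=0$; hence $v_i(m_0,\underline m)$ is locally nilpotent, while the $x_\alpha(m_0,\underline m)$ are so directly, completing the local-nilpotency requirement.

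The step I expect to be the main obstacle is the conjugation identity of the third paragraph. One must justify that $\exp\big(x_{-\alpha_i}(0,\underline 0)\big)$ exists and that Hadamard's lemma is valid in this possibly infinite-dimensional setting—both follow from local nilpotency, which confines every relevant computation to a finite-dimensional invariant subspace—and, crucially, one must track the central terms in (\ref{eq:2.2}) to confirm that a degree-$(0,\underline 0)$ conjugator produces none, so that the toroidal conjugation collapses to the finite-dimensional inner automorphism $\exp(\mathrm{ad}\,x_{-\alpha_i})$. By contrast, the reduction of the commutativity requirement to isotropy and the explicit construction of the basis are comparatively routine.
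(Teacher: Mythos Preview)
Your proof is correct and follows essentially the standard argument the paper defers to via Lemma~2.3 of \cite{L1}: take the root vectors together with inner-automorphic translates of simple root vectors to fill out the Cartan directions, and transport local nilpotency by conjugation with $\exp(x_{-\alpha_i}(0,\underline 0))$. The one toroidal-specific point you correctly isolate---that conjugating by a degree-$(0,\underline 0)$ element contributes no central terms in (\ref{eq:2.2}), so the conjugation collapses to the finite-dimensional inner automorphism $\exp(\mathrm{ad}\,x_{-\alpha_i})$---is exactly the adaptation needed to pass from the affine case to $\tau$.
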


\section{Category $\mathcal{E}_{\tau}$ of $\tau$-modules}
\def\theequation{3.\arabic{equation}}
\setcounter{equation}{0}

In this section, we define and study a category $\mathcal{E}_{\tau}$
of modules for the toroidal Lie algebra $\tau$, which is analogous to the
category $\mathcal{E}$ studied in \cite{L1} of modules for the affine Lie algebra $\hat{\g}$.
The category $\mathcal{E}_{\tau}$ is proved to
contain the evaluation modules studied by S. Eswara Rao. Moreover, it
is proved that every irreducible integrable $\tau$-module in
category $\mathcal{E}_{\tau}$ is isomorphic to a finite dimensional
evaluation module.

\begin{definition}\label{}
          Category $\mathcal{E}_{\tau}$ is defined to consist of $\tau$-modules
          $W$ satisfying the condition that there exist nonzero polynomials
          $p_{i}(x)\in\mathbb{C}[x]$, $i= 0,1,\dots,r$, such that
          $$p_{i}(x_{i})a(x_{0},\underline{x})w = 0 \;\; \mbox{for}\;\; i = 0,1,\dots,r,\ a\in
\mathfrak{g}, w\in W. $$
\end{definition}

\begin{lem}\label{lem:3.2}
The central elements $K_{0}(\underline{n})$ and $K_{i}$ of $\tau$ for $i=1,\dots,r,
            \underline{n}\in\mathbb{Z}^{r}$ act
              trivially on every $\tau$-module
               in category $\mathcal{E}_{\tau}.$
\end{lem}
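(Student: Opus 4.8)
The plan is to feed the commutator relation (\ref{eq:2.3}), applied to an arbitrary vector $w\in W$, against the defining polynomials of $\mathcal{E}_{\tau}$, and then to disentangle the two families of central terms by residue extractions in the formal variables. The first reduction is to kill the non-central part of (\ref{eq:2.3}): since $\langle\cdot,\cdot\rangle$ is nondegenerate I may pick $h\in\mathfrak{g}$ with $\langle h,h\rangle=:c\neq 0$, and because $[h,h]=0$ the bracket $[h(x_0,\underline{x}),h(y_0,\underline{y})]$ reduces to just the $K_0(\underline{y})$-term and the $K_j$-terms. Next, using that $p_0(x_0)a(x_0,\underline{x})w'=0$ holds for \emph{every} $w'\in W$ and every $a\in\mathfrak{g}$, I would check $p_0(x_0)[h(x_0,\underline{x}),h(y_0,\underline{y})]w=0$: in $p_0(x_0)h(x_0,\underline{x})h(y_0,\underline{y})w$ the operator $p_0(x_0)h(x_0,\underline{x})$ annihilates each coefficient of $h(y_0,\underline{y})w$, while in $p_0(x_0)h(y_0,\underline{y})h(x_0,\underline{x})w$ the scalar $p_0(x_0)$ commutes past $h(y_0,\underline{y})$ and kills $h(x_0,\underline{x})w$.

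Applying $p_0(x_0)$ to the reduced right-hand side and using $f(x_0)\delta(x_0/y_0)=f(y_0)\delta(x_0/y_0)$ together with its $\partial_{y_0}$-version $p_0(x_0)\partial_{y_0}(y_0^{-1}\delta(x_0/y_0))=p_0'(y_0)y_0^{-1}\delta(x_0/y_0)+p_0(y_0)\partial_{y_0}(y_0^{-1}\delta(x_0/y_0))$, I obtain an identity equating $0$ to $c$ times a $K_0(\underline{y})$-group carrying only plain factors $\delta(x_i/y_i)$ in the $\underline{x}$-variables, plus $K_j$-groups each carrying exactly one derivative $\partial_{y_j}(y_j^{-1}\delta(x_j/y_j))$. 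The whole argument then rests on the residue identities $\mathrm{Res}_{x}(x^k\delta(x/y))=y^{k+1}$ and $\mathrm{Res}_{x}(x^k\partial_{y}(y^{-1}\delta(x/y)))=k\,y^{k-1}$; in particular the residue of a derivative-of-delta vanishes for $k=0$ but is nonzero for $k=1$, which is exactly the mechanism that separates the two central families.

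To isolate $K_0$ I would apply $\mathrm{Res}_{x_0}[\,x_0\,\cdot\,]$ followed by $\mathrm{Res}_{x_1}\cdots\mathrm{Res}_{x_r}$. The plain residues in $x_1,\dots,x_r$ kill every $K_j$-group (each has a factor $\partial_{y_j}(y_j^{-1}\delta(x_j/y_j))$ whose $x_j$-residue is $0$), leaving $c\,[\,y_0p_0'(y_0)+p_0(y_0)\,]K_0(\underline{y})w=0$. Since $y_0p_0'(y_0)+p_0(y_0)=\frac{d}{dy_0}(y_0p_0(y_0))$ is a nonzero polynomial whenever $p_0\neq 0$ (note the multiplication by $x_0$ is what rescues this term even when $p_0$ is constant), and $c\neq 0$, I conclude $K_0(\underline{y})w=0$, i.e.\ $K_0(\underline{n})w=0$ for all $\underline{n}$. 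This bookkeeping — choosing the multiplier $x_0$ and reading off that only the plain-delta group survives — is where I expect the most care to be needed; it is the heart of the lemma.

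Finally, substituting $K_0(\underline{y})w=0$ back removes the $K_0$-group entirely, so after $\mathrm{Res}_{x_0}$ the identity collapses to $0=c\,p_0(y_0)y_0^{-1}\sum_j K_j\,\partial_{y_j}(y_j^{-1}\delta(x_j/y_j))\prod_{i\neq j}x_i^{-1}y_i^{-1}\delta(x_i/y_i)\,w$. For a fixed $l$, applying $\mathrm{Res}_{x_1}\cdots\mathrm{Res}_{x_r}[\,x_l\,\cdot\,]$ now selects exactly the $j=l$ summand: the extra factor $x_l$ turns the $l$-th derivative-of-delta into the nonzero residue $1$, while for $j\neq l$ the factor $\partial_{y_j}(y_j^{-1}\delta(x_j/y_j))$ still integrates to $0$. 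This yields $c\,p_0(y_0)y_0^{-1}\big(\prod_{i\neq l}y_i^{-1}\big)K_l w=0$, and since the scalar coefficient is a nonzero series, $K_l w=0$. As $w\in W$ was arbitrary, every $K_0(\underline{n})$ and every $K_l$ acts trivially on $W$.
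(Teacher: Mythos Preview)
Your argument is correct. Both your proof and the paper's work by multiplying the commutator relation (\ref{eq:2.3}) by an annihilating polynomial and then separating the two central families via residue extractions, so the underlying mechanism is the same. The execution differs, however, and yours is a bit more economical. The paper splits into two cases (some $p_i$ constant versus all $p_i$ of positive degree), chooses $a,b$ with $\langle a,b\rangle=1$, multiplies by $p_s(x_s)p_s(y_s)$ for each $s=1,\dots,r$ to first kill the $K_0$-term and extract $K_s=0$, and only afterwards multiplies by $p_0(x_0)p_0(y_0)$ to get $K_0(\underline{n})=0$. You instead use a single element $h$ with $\langle h,h\rangle\neq 0$, multiply only by $p_0(x_0)$, and derive $K_0(\underline{y})w=0$ first; the extra factor of $x_0$ before the $x_0$-residue is what makes your coefficient $(y_0p_0(y_0))'$ automatically nonzero even when $p_0$ is constant, so no case split is needed. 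The paper's approach has the small advantage of making the separate roles of the polynomials $p_1,\dots,p_r$ visible, while yours shows that $p_0$ alone already forces the whole center to act trivially.
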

\begin{proof}Let $W$ be a $\tau$-module in category $\mathcal{E}_{\tau}$
with nonzero polynomials $p_{i}(x)$ such that
$p_{i}(x_{i})a(x_{0},\underline{x})w=0$ for any $a\in \g, w\in W,
i=0,1,\dots,r.$ First, we consider the case that
$p_{i}(x)\in \mathbb{C}$ for some $i\in\{0,1,\dots,r\}$. In this case, we have
$a(x_{0},\underline{x})w=0$, i.e., $a(n_{0},\underline{n})w=0$ for
all $a\in \mathfrak{g}$, $w\in W$, $n_{0}\in\mathbb{Z}$,
$\underline{n}\in\mathbb{Z}^{r}.$ Then by using the Lie relations
(\ref{eq:2.2}), we have

\begin{align}
0=\left(n_{0}\langle a , b\rangle
\delta_{n_{0}+m_{0},0}K_{0}(\underline{n}+\underline{m})
   +\langle a , b\rangle \delta_{n_{0}+m_{0},0}
  \delta_{\underline{n}+\underline{m},0}\sum_{i=1}^{r}n_{i}K_{i}\right)w
  \label{eq:3.1}
\end{align}
 for all $b\in \mathfrak{g}, n_{0},m_{0}\in\mathbb{Z}$,
$\underline{n},\underline{m}\in\mathbb{Z}^{r}$, $w\in W$. Taking
$n_{0}=0$ in (\ref{eq:3.1}), we get
\begin{align}0=\langle a ,
b\rangle \delta_{m_{0},0}
\delta_{\underline{n}+\underline{m},0}\left(\sum_{i=1}^{r}n_{i}K_{i}\right)w.
\end{align}

For each fixed $i\in\{1,\dots,r\}$, by taking $n_{j}=\delta_{ij}$ for $j\in\{1,\dots,r\}$,
$m_{0}=0$ and $\underline{n}+\underline{m}=0$, we get
$K_{i}=0$ on $W$. Thus the identity (\ref{eq:3.1}) becomes
\begin{align}
0=n_{0}\langle a , b\rangle
\delta_{n_{0}+m_{0},0}K_{0}(\underline{n}+\underline{m})w.\label{eq:3.2}
\end{align}
Taking $n_{0}\neq 0$ and $n_{0}+m_{0}=0$ in
(\ref{eq:3.2}), we get
$K_{0}(\underline{n}+\underline{m})w=0$ for all
$\underline{n},\underline{m}\in\mathbb{Z}^{r}, w\in W.$

Now we consider the case that $p_{i}(x)$ are of positive degrees for all
$i=0,1,\dots,r.$ That is, $p'_{i}(x)\neq 0$ for all
$i=0,1,\dots,r.$ Pick $a,b\in \mathfrak{g}$ such that $\langle a, b
\rangle=1$. For $s=1,\dots,r$, by using the relations
(\ref{eq:2.3}), we have
 \begin{eqnarray}
         0 &=& p_{s}(x_{s})p_{s}(y_{s})[a(x_{0},\underline{x}),b(y_{0},\underline{y})]  \nonumber\\
            &=& p_{s}(x_{s})p_{s}(y_{s})K_{0}(\underline{y})(\frac{\partial}{\partial y_{0}}y_{0}^{-1}\delta(\frac{x_{0}}{y_{0}}))
                  \prod_{i=1}^{r}y_{i}^{-1}\delta(\frac{x_{i}}{y_{i}})\nonumber\\
         &&{}+p_{s}(x_{s})p_{s}(y_{s}) x_{0}^{-1}y_{0}^{-1}\delta(\frac{x_{0}}{y_{0}})
                 \sum_{j=1}^{r}(K_{j}(\frac{\partial}{\partial y_{j}}y_{j}^{-1}\delta(\frac{x_{j}}{y_{j}}))
                    \prod_{i=1,i\neq j}^{r}x_{i}^{-1}y_{i}^{-1}\delta(\frac{x_{i}}{y_{i}})). \label{eq:3.3}
            \end{eqnarray}

        Notice that Res$_{y_{0}}(\frac{\partial}{\partial y_{0}}y_{0}^{-1}\delta(\frac{x_{0}}{y_{0}}))=0$
        and Res$_{y_{i}}p_{i}(x_{i})p_{i}(y_{i})
                    (\frac{\partial}{\partial y_{i}}y_{i}^{-1}\delta(\frac{x_{i}}{y_{i}}))=
                    - p_{i}(x_{i})p'_{i}(x_{i})$ for $i=0,1,\dots,r.$
 Taking Res$_{y_{0}}$Res$_{y_{1}}$Res$_{y_{2}}$$\cdots$Res$_{y_{r}}$ of the identity
 (\ref{eq:3.3}), we obtain
            $$ K_{s}p_{s}(x_{s})p'_{s}(x_{s})x_{0}^{-1}x_{1}^{-1}\cdots x_{s-1}^{-1}x_{s+1}^{-1}\cdots x_{r}^{-1}=0.$$
            Thus we have $K_{s} = 0 $ on $W$ for $s=1,\dots,r$.
            Now multiplying the identity (\ref{eq:2.3}) by $p_{0}(x_{0})p_{0}(y_{0})$, we get
            \begin{eqnarray}
             \qquad\qquad\qquad\qquad 0 & = & p_{0}(x_{0})p_{0}(y_{0})[a(x_{0},\underline{x}),b(y_{0},\underline{y})]  \nonumber\\
              &  = & p_{0}(x_{0})p_{0}(y_{0})K_{0}(\underline{y})(\frac{\partial}{\partial y_{0}}y_{0}^{-1}\delta(\frac{x_{0}}{y_{0}}))
                  \prod_{i=1}^{r}y_{i}^{-1}\delta(\frac{x_{i}}{y_{i}}).           \label{eq:3.4}
            \end{eqnarray}
            Taking Res$_{y_{0}}$Res$_{x_{1}}$ $\cdots$Res$_{x_{r}}$ of the identity (\ref{eq:3.4})
            and using
            $y_{i}^{-1}\delta(\frac{x_{i}}{y_{i}})=x_{i}^{-1}\delta(\frac{y_{i}}{x_{i}})$, we get
            $$0=p_{0}(x_{0})p'_{0}(x_{0})K_{0}(\underline{y}).$$ Thus $K_{0}(\underline{n})=0$ on $W$ for all $\underline{n}\in\mathbb{Z}^{r}$.
\end{proof}

We now give some examples of $\tau$-modules in category
$\mathcal{E}_{\tau}$. Let $U$ be a $\mathfrak{g}$-module and let
$\underline{z} = (z_{0},\dots,z_{r})\in (\mathbb{C}^{\ast})^{r+1}$,
where $\mathbb{C}^{\ast}$ denotes the set of nonzero complex numbers. We
define an action of $\tau$ on $U$ by
 \begin{eqnarray}
 a(n_{0},\underline{n})u =
z_{0}^{n_{0}}\cdots z_{r}^{n_{r}}(au)   \label{eq:3.5}
\end{eqnarray}
 for $a\in \mathfrak{g},n_{0}\in
\mathbb{Z},\underline{n} = (n_{1},\dots,n_{r})\in\mathbb{Z}^{r},$
and by letting the whole center act trivially on $U$. Clearly, this makes $U$ a
$\tau$-module, which is denoted by $U(\underline{z}).$ More generally,
let $U_{1},\dots,U_{s}$ be $\mathfrak{g}$-modules and let
$\underline{z}_{j} =
(z_{0j},\dots,z_{rj})\in(\mathbb{C}^{*})^{r+1}$ for $j =
1,\dots,s.$ Then $U = U_{1}\otimes\cdots\otimes U_{s}$ is a $\tau$-
module where the whole center act trivially and
 \begin{eqnarray}
 a(n_{0},\underline{n})(u_{1}\otimes\cdots\otimes u_{s}) =
  \sum_{j = 1}^{s}z_{0j}^{n_{0}}\cdots z_{rj}^{n_{r}}
      (u_{1}\otimes\cdots\otimes au_{j}\otimes\cdots\otimes u_{s})  \label{eq:3.6}
       \end{eqnarray}
for $a\in
\mathfrak{g},n_{0}\in\mathbb{Z},\underline{n}\in\mathbb{Z}^{r},u_{j}\in
U_{j}.$ We denote this $\tau$-module by $\otimes_{j=1}^{s}U_{j}(\underline{z}_{j})$
and call it an {\em evaluation module.}

Next, we show that $\otimes_{j=1}^{s}U_{j}(\underline{z}_{j})$
belongs to the category $\mathcal{E}_{\tau}$. For $a\in
\mathfrak{g}$, $u_{j}\in U_{j}(\underline{z}_{j})$, we write
(\ref{eq:3.6}) in terms of generating functions to get
\begin{eqnarray*}
a(x_{0},\underline{x})(u_{1}\otimes\cdots\otimes u_{s})
&=&\sum_{n_{0}\in\mathbb{Z}}\sum_{\underline{n}\in\mathbb{Z}^{r}}
a(n_{0},\underline{n})x_{0}^{-n_{0}-1}x^{-\underline{n}-1}(u_{1}\otimes\cdots\otimes u_{s})\\
&=&\sum_{j = 1}^{s}x_{0}^{-1}\cdots x_{r}^{-1}
\delta(\frac{z_{0j}}{x_{0}})\cdots\delta(\frac{z_{rj}}{x_{r}})
      (u_{1}\otimes\cdots\otimes au_{j}\otimes\cdots\otimes u_{s}).
\end{eqnarray*}
Since $(x_{i}-z_{ij})\delta(\frac{z_{ij}}{x_{i}}) = 0 $ for $i =
0,1,\dots,r$, $j = 1,\dots,s,$ we have
$$ (x_{i}-z_{i1})\cdots (x_{i}-z_{is})
           a(x_{0},\underline{x})(u_{1}\otimes\cdots\otimes u_{s}) = 0 $$
for $i = 0,1,\dots,r.$

To summarize we have:

\begin{lem}\label{lem:3.3} Let $U_{1},\dots,U_{s}$ be $\mathfrak{g}$-modules and let $z_{ij}$
be nonzero complex numbers, $i = 0,1,\dots,r$, $j = 1,\dots,s.$
Set $p_{i}(x) = (x-z_{i1})\cdots (x-z_{is})$ for $i=
0,1,\dots,r$. Then the tensor product $\tau$-module
$\otimes_{j=1}^{s}U_{j}(\underline{z}_{j})$ is in the category
$\mathcal{E}_{\tau}$, where $\underline{z}_{j} =
(z_{0j},\dots,z_{rj})\in(\mathbb{C}^{*})^{r+1}.$
\end{lem}

 In the following we shall classify irreducible integrable $\tau$-modules in category
$\mathcal{E}_{\tau}$. For $a\in \mathfrak{g}$, we have

$$a(x_{0},\underline{x}) =
\sum_{n_{0}\in\mathbb{Z}}
\sum_{\underline{n}\in\mathbb{Z}^{r}}
(a\otimes t_{0}^{n_{0}}t^{\underline{n}})x_{0}^{-n_{0}-1}x^{-\underline{n}-1}
 = a\otimes \left(\prod_{i=0}^{r}x_{i}^{-1}\delta(\frac{t_{i}}{x_{i}})\right).  $$

For each $f_{k}(x)\in\mathbb{C}[x],k\in\{0,1,\dots,r\},m_{0}\in\mathbb{Z},
\underline{m}=(m_{1},\dots,m_{r})\in\mathbb{Z}^{r},a\in \mathfrak{g}$,
we have
\begin{eqnarray*}
\qquad\qquad\quad(\prod_{j=0}^{r}x_{j}^{m_{j}})f_{k}(x_{k})a(x_{0},\underline{x})&
= & a\otimes (\prod_{j=0}^{r}x_{j}^{m_{j}})f_{k}(x_{k})(\prod_{i=0}^{r}x_{i}^{-1}\delta(\frac{t_{i}}{x_{i}}))\\
&= & a\otimes (\prod_{j=0}^{r}t_{j}^{m_{j}})f_{k}(t_{k})
(\prod_{i=0}^{r}x_{i}^{-1}\delta(\frac{t_{i}}{x_{i}})).
\qquad\qquad\quad
\end{eqnarray*}

Since Res$_{x_{r}}\cdots$ Res$_{x_{1}}$Res$_{x_{0}}
(\prod\limits_{j=0}^{r}x_{j}^{m_{j}})f_{k}(x_{k})a(x_{0},\underline{x})
=a\otimes (\prod\limits_{j=0}^{r}t_{j}^{m_{j}})f_{k}(t_{k})$, we
have that for any $\tau$-module $W$, if
$f_{k}(x_{k})a(x_{0},\underline{x})W=0$, then $(a\otimes
f_{k}(t_{k})\mathbb{C}[t_{0}^{\pm 1},t_{1}^{\pm 1},\dots,t_{r}^{\pm
1}])W=0.$

For nonzero polynomials $p_{i}(x)$, let $W$ be a $\tau$-module such that
$$p_{i}(x_{i})a(x_{0},\underline{x})w = 0$$
for $a\in \mathfrak{g}, w\in W,i = 0,\dots,r.$ We denote by
$P=\langle p_{0}({t_{0}}),\dots,p_{r}(t_{r})\rangle$ the ideal of
$\mathbb{C}[t_{0}^{\pm 1},t_{1}^{\pm 1},\dots,t_{r}^{\pm 1}]$
generated by $p_{i}({t_{i}}), i = 0,\dots,r.$ Then $W$ is a module
for the Lie algebra  $\g\otimes\mathbb{C}[t_{0}^{\pm 1},t_{1}^{\pm
1},\dots,t_{r}^{\pm 1}]/P$. We have the following result.


\begin{prop}\label{pro:3.4} Any finite dimensional irreducible $\tau$-module
$W$ from the category $\mathcal{E}_{\tau}$ is isomorphic to a
$\tau$-module $U_{1}(\underline{z}_{1})\otimes\cdots\otimes
U_{s}(\underline{z}_{s})$ for some finite dimensional irreducible
$\mathfrak{g}$-modules $U_{1},\dots,U_{s}$, and for $s$ distinct
$(r+1)$-tuples $\underline{z}_{i} =
(z_{0i},z_{1i},\dots,z_{ri})\in(\mathbb{C}^{*})^{r+1}$
($i=1,\ldots,s$).

\end{prop}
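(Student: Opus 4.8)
The plan is to capitalize on the reduction already assembled just before the statement. Since $W$ lies in $\mathcal{E}_{\tau}$, Lemma~\ref{lem:3.2} guarantees that the entire center acts trivially, so $W$ is simply a module for the multiloop algebra $\mathfrak{g}\otimes\mathbb{C}[t_{0}^{\pm 1},t_{1}^{\pm 1},\dots,t_{r}^{\pm 1}]$; and the defining polynomials $p_{i}$ force this action to factor through the quotient Lie algebra $L:=\mathfrak{g}\otimes R$, where $R:=\mathbb{C}[t_{0}^{\pm 1},\dots,t_{r}^{\pm 1}]/P$ and $P=\langle p_{0}(t_{0}),\dots,p_{r}(t_{r})\rangle$. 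The whole problem thus becomes the classification of finite dimensional irreducible modules for $L$, and the structural key is that $R$ is finite dimensional.

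First I would analyze $R$. Because $\mathbb{C}[t_{0}^{\pm 1},\dots,t_{r}^{\pm 1}]=\bigotimes_{i=0}^{r}\mathbb{C}[t_{i}^{\pm 1}]$ and $P$ is generated by the one-variable polynomials $p_{i}(t_{i})$, one has $R\cong\bigotimes_{i=0}^{r}\mathbb{C}[t_{i}^{\pm 1}]/(p_{i}(t_{i}))$. Each factor is finite dimensional (a root of $p_{i}$ at $0$ may be discarded since $t_{i}$ is a unit), so $R$ is a finite dimensional commutative unital $\mathbb{C}$-algebra, hence Artinian. By the structure theorem it splits as a finite product $R\cong\prod_{j=1}^{s}R_{j}$ of local Artinian algebras, whose maximal ideals $\mathfrak{m}_{j}$ are in bijection with the points $\underline{z}_{j}=(z_{0j},\dots,z_{rj})\in(\mathbb{C}^{*})^{r+1}$ at which all the $p_{i}$ vanish; each $R_{j}$ has nilpotent maximal ideal $\mathfrak{n}_{j}$ and $R_{j}/\mathfrak{n}_{j}\cong\mathbb{C}$ via the evaluation $t_{i}\mapsto z_{ij}$. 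Correspondingly $L=\bigoplus_{j=1}^{s}(\mathfrak{g}\otimes R_{j})$ is a direct sum of (mutually commuting) ideals.

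Next I would decompose the module itself. Since $W$ is a finite dimensional irreducible module over a direct sum of commuting Lie-algebra ideals, the standard factorization (using $U(L)\cong\bigotimes_{j}U(\mathfrak{g}\otimes R_{j})$ together with Schur's lemma over $\mathbb{C}$) yields $W\cong\bigotimes_{j=1}^{s}U_{j}$ with each $U_{j}$ a finite dimensional irreducible $(\mathfrak{g}\otimes R_{j})$-module. For a single factor I would note that $\mathfrak{g}\otimes\mathfrak{n}_{j}$ is a nilpotent ideal of $\mathfrak{g}\otimes R_{j}$, and that, because $\mathfrak{g}=[\mathfrak{g},\mathfrak{g}]$ is perfect and $1\in R_{j}$, one has $[\mathfrak{g}\otimes R_{j},\,\mathfrak{g}\otimes\mathfrak{n}_{j}]=\mathfrak{g}\otimes\mathfrak{n}_{j}$. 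By Lie's theorem this solvable ideal acts on the irreducible $U_{j}$ through a single character $\lambda$, which (by the invariance lemma in characteristic zero) vanishes on $[\mathfrak{g}\otimes R_{j},\,\mathfrak{g}\otimes\mathfrak{n}_{j}]$; hence $\lambda=0$ and $\mathfrak{g}\otimes\mathfrak{n}_{j}$ acts trivially. Thus $U_{j}$ is a module for $\mathfrak{g}\otimes(R_{j}/\mathfrak{n}_{j})\cong\mathfrak{g}$, i.e.\ a finite dimensional irreducible $\mathfrak{g}$-module, on which $a\otimes t_{0}^{n_{0}}t^{\underline{n}}$ acts by the scalar $z_{0j}^{n_{0}}\cdots z_{rj}^{n_{r}}$ times $a$. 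Comparing with (\ref{eq:3.6}) identifies $U_{j}$ with the evaluation module $U_{j}(\underline{z}_{j})$, and since the $\mathfrak{m}_{j}$ are distinct the points $\underline{z}_{j}$ are distinct, giving $W\cong U_{1}(\underline{z}_{1})\otimes\cdots\otimes U_{s}(\underline{z}_{s})$.

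The main obstacle is the step showing that the nilpotent ideal $\mathfrak{g}\otimes\mathfrak{n}_{j}$ acts trivially: this is precisely where the perfectness of $\mathfrak{g}$ is indispensable, since it is what forces the Lie character $\lambda$ to vanish, and it must be paired with a careful check that the vanishing statement coming from Lie's theorem genuinely applies here. The remaining ingredients—the finite dimensionality and Artinian decomposition of $R$, and the tensor factorization of irreducibles over a direct sum of commuting ideals—are standard, though one should stay attentive to the use of the unit $1\in R_{j}$ in the bracket computation and to the fact that all relevant points lie in $(\mathbb{C}^{*})^{r+1}$ because the $t_{i}$ are invertible.
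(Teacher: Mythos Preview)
Your argument is correct and takes a genuinely different route from the paper's. The paper proceeds inductively on the number of loop variables: it views the centerless $\tau$ as the one-variable affinization $\widehat{\tau_r}=\tau_r\otimes\mathbb{C}[t_r^{\pm 1}]$ of $\tau_r=\mathfrak{g}\otimes\mathbb{C}[t_0^{\pm 1},\dots,t_{r-1}^{\pm 1}]$, invokes Proposition~3.9 of \cite{L1} to split $W$ as a tensor product of evaluations at the distinct nonzero roots of $p_r$, then uses a Vandermonde-determinant argument to push the remaining polynomial conditions down to each tensor factor, and recurses. You instead treat all variables simultaneously by recognizing $R=\mathbb{C}[t_0^{\pm 1},\dots,t_r^{\pm 1}]/P$ as a product of Artinian local rings and annihilating the nilpotent part $\mathfrak{g}\otimes\mathfrak{n}_j$ via Lie's theorem, the invariance lemma, and the identity $[\mathfrak{g}\otimes R_j,\mathfrak{g}\otimes\mathfrak{n}_j]=\mathfrak{g}\otimes\mathfrak{n}_j$ coming from $\mathfrak{g}=[\mathfrak{g},\mathfrak{g}]$. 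Your approach is more self-contained---it avoids the external reference to \cite{L1} and gives a clean structural reason why only the reduced evaluation points survive---whereas the paper's recursion has the virtue of reducing directly to the already-established affine case and making the separation of evaluation points explicit at each stage via the Vandermonde mechanism.
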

\begin{proof} Let $p_{i}(x)$ be nonzero polynomials such that
              $p_{i}(x_{i})a(x_{0},\underline{x})w=0$
              for $i=0,1,\dots,r$, $w\in W$.
              Assume that
              $z_{i,1},\dots,z_{i,N_{i}}\in\mathbb{C}^{*}$
              are distinct nonzero roots of $p_{i}(x_{i})$, where $N_{i}$ is the number of
distinct nonzero roots of the polynomial $p_{i}(x_{i})$ for $0\leq
i\leq r$. Let $s=N_{0}\cdots N_{r}$.
              Set $\tau_{r}=\mathfrak{g}\otimes\mathbb{C}[t_{0}^{\pm
              1}, t_{1}^{\pm 1},\dots,t_{r-1}^{\pm 1}]$,
              $\tau_{r-1}=\mathfrak{g}\otimes\mathbb{C}[t_{0}^{\pm
              1}, t_{1}^{\pm 1},\dots, t_{r-2}^{\pm 1}]$,
              $\dots,$
              $\tau_{2}=\mathfrak{g}\otimes\mathbb{C}[t_{0}^{\pm 1}, t_{1}^{\pm 1}]$,
              $\tau_{1}=\mathfrak{g}\otimes\mathbb{C}[t_{0}^{\pm
              1}]$. Let
              $\widehat{\tau_{i}}=\tau_{i}\otimes{\mathbb{C}}[t_{i}]$
              for $1\leq i\leq r$.
              By Lemma \ref{lem:3.2}, the while center of $\tau$ act trivially on $W$, thus $W$ can be viewed as a
              $\widehat{\tau_{r}}$-module
              with the property
              $p_{r}(x_{r})a(x_{0},\underline{x})w=0$ for $a\in \mathfrak{g}, w\in
              W$,
             and $[\tau_{r},\tau_{r}]w
              =\tau_{r}w$ for $w\in W$.
              From the Proposition 3.9 of \cite{L1},
              we have that
              $W$ is isomorphic to a $\tau$-module
              $U^{(r)}_{1}(z_{r,1})\otimes\cdots\otimes U^{(r)}_{N_{r}}(z_{r,N_{r}})$
              for some finite dimensional irreducible
              $\tau_{r}$-modules
              $U^{(r)}_{1},\dots,U^{(r)}_{N_{r}}$, and distinct nonzero complex numbers $z_{r,1},\dots,z_{r,N_{r}}$.
              Since $p_{r-1}(x_{r-1})a(x_{0},\underline{x})w=0$,
              i.e., $p_{r-1}(x_{r-1})a(x_{0},\underline{x})u_{1}\otimes\cdots\otimes u_{N_{r}}=0$
              for $u_{i}\in U^{(r)}_{i}$, $i=1,\dots,N_{r}$.
              It follows that
              $$\sum\limits_{j=1}^{N_{r}}p_{r-1}(x_{r-1})z_{r,j}^{n_{r}}
              (u_{1}\otimes\cdots\otimes a(x_{0},x_{1},\dots,x_{r-1})u_{j}\otimes\cdots\otimes
              u_{N_{r}})=0$$
              for any $n_{r}\in\mathbb{Z}$, where $$a(x_{0},x_{1},\dots,x_{r-1})=\sum\limits_{{n_{0},\dots,n_{r-1}\in
              {\mathbb{Z}}}}(a\otimes t_{0}^{n_{0}}\cdots t_{r-1}^{n_{r-1}})x_{0}^{-n_{0}-1}\cdots
              x_{r-1}^{-n_{r-1}-1}.$$
Note that $z_{r,j}$ are distinct for $1\leq j\leq N_{r}$. By taking
$n_{r}= 0,\ldots, N_{r}-1$ and then
               using the Vandermonde determinant property, we get that
              $p_{r-1}(x_{r-1})a(x_{0},x_{1},\dots,x_{r-1})U_{j}^{(r)}=0$
              for $1\leq j\leq N_{r}$.
              Thus for every $1\leq j\leq N_{r}$,
              $U_{j}^{(r)}$ is a finite dimensional $\tau_{r}$-module with
               $p_{r-1}(x_{r-1})a(x_{0},x_{1},\dots,x_{r-1})U_{j}^{(r)}=0$.
               Since $[\tau_{r-1},\tau_{r-1}]=\tau_{r-1}$ and $\tau_{r}=\widehat{\tau_{r-1}}$,
                by using the Proposition 3.9 of \cite{L1} again, we have that
               $U_{j}^{(r)}$ is isomorphic to $U_{j,1}^{(r-1)}(z_{r-1,1})\otimes\cdots\otimes U^{(r-1)}_{j,N_{r-1}}(z_{r-1,N_{r-1}})$
               for some finite dimensional $\tau_{r-1}$-module
               $U^{(r-1)}_{j,1},\dots,U^{(r-1)}_{j,N_{r-1}}$.
               Therefore $W$ is isomorphic to a module of the from
               $$U_{1,1}^{(r-1)}(z_{r,1},z_{r-1,1})\otimes\cdots\otimes U^{(r-1)}_{1,N_{r-1}}(z_{r,1},z_{r-1,N_{r-1}})
              \otimes\cdots\otimes
              U^{(r-1)}_{N_{r},N_{r-1}}(z_{r,N_{r}},z_{r-1,N_{r-1}})$$
              for some finite dimensional irreducible
              $\tau_{r-1}$-modules
              $U^{(r-1)}_{1,1},\dots,U^{(r-1)}_{1,N_{r-1}},\dots,U^{(r-1)}_{N_{r},N_{r-1}}$.
               Then the proposition follows from recursion.
\end{proof}

\begin{prop}\label{pro:3.5}
Irreducible integrable $\tau$-modules in the category
$\mathcal{E}_{\tau}$ up to isomorphism are exactly the evaluation
modules $U_{1}(\underline{z}_{1})\otimes\cdots\otimes
U_{s}(\underline{z}_{s})$, where  $U_{1},\dots,U_{s}$ are finite
dimensional irreducible $\mathfrak{g}$-modules and
$\underline{z}_{i} =
(z_{0i},z_{1i},\dots,z_{ri})\in(\mathbb{C}^{*})^{r+1}$  for
$i=1,\ldots,s$ are distinct $(r+1)$-tuples.
\end{prop}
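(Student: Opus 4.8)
The plan is to prove the two inclusions separately, reducing the nontrivial direction to Proposition~\ref{pro:3.4} by first establishing finite-dimensionality. For the easy inclusion I would check that every module of the stated form $U_{1}(\underline{z}_{1})\otimes\cdots\otimes U_{s}(\underline{z}_{s})$ is irreducible, integrable and lies in $\mathcal{E}_{\tau}$. Membership in $\mathcal{E}_{\tau}$ is exactly Lemma~\ref{lem:3.3}. Integrability is immediate: the center acts trivially by construction, the $\underline{\mathfrak{h}}$-weight decomposition is inherited from the finite-dimensional $\mathfrak{g}$-module decompositions of the $U_{j}$, and each $x_{\alpha}(m_{0},\underline{m})$ acts as a finite sum of operators built from the nilpotent action of $x_{\alpha}$ on the finite-dimensional $U_{j}$, hence nilpotently on the finite-dimensional tensor product. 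Irreducibility when the tuples $\underline{z}_{i}$ are distinct is the standard fact for evaluation modules; I would prove it from the distinctness of the points by the same Vandermonde/separation argument already used in the proof of Proposition~\ref{pro:3.4} (cf.\ \cite{R,L1}).

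For the converse, let $W$ be an irreducible integrable module in $\mathcal{E}_{\tau}$. By Lemma~\ref{lem:3.2} the whole center acts trivially, and by the discussion preceding Proposition~\ref{pro:3.4} the action factors through the Lie algebra $L=\mathfrak{g}\otimes R$, where $R=\mathbb{C}[t_{0}^{\pm1},\dots,t_{r}^{\pm1}]/P$ and $P=\langle p_{0}(t_{0}),\dots,p_{r}(t_{r})\rangle$; since each $p_{i}$ is a nonzero polynomial, each $t_{i}$ satisfies a nonzero relation in $R$ and $R$ is a finite-dimensional commutative algebra. In view of Proposition~\ref{pro:3.4}, it suffices to prove that $W$ is finite-dimensional. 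Writing $N$ for the nilradical of $R$, the heart of the matter is to show that $\mathfrak{g}\otimes N$ acts as zero; once this is known, $W$ becomes an irreducible integrable module for the semisimple Lie algebra $\mathfrak{g}\otimes(R/N)$, a finite direct sum of copies of $\mathfrak{g}$ (as $R/N$ is reduced over $\mathbb{C}$), and the classical finite-dimensionality of irreducible integrable modules for finite-dimensional semisimple Lie algebras completes the reduction.

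To kill $\mathfrak{g}\otimes N$ I would argue by descending induction on the nilpotency degree, treating at each stage the top nonzero power $N^{k}$ (so $N^{k+1}=0$). Then $\mathfrak{a}=\mathfrak{g}\otimes N^{k}$ is an abelian ideal of $L$ with $[\mathfrak{g}\otimes1,\mathfrak{a}]=\mathfrak{a}$, and for $n\in N^{k}$ one has $n^{2}\in N^{2k}=0$, so that any two elements of $\mathfrak{g}\otimes\mathbb{C}n$ commute. The key point is that every element of $\mathfrak{a}$ acts locally nilpotently: by integrability each $x_{\alpha}\otimes n$ is locally nilpotent, and conjugating by the locally nilpotent operators $\exp(x_{\beta}\otimes1)$ shows that $Y\otimes n$ acts locally nilpotently for every $Y$ in the adjoint orbit of a root vector; since such orbits span the simple Lie algebra $\mathfrak{g}$ and the resulting operators pairwise commute, every $z\otimes n\in\mathfrak{a}$ is a linear combination of commuting locally nilpotent operators, hence locally nilpotent. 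Consequently the joint kernel $W_{0}=\{w\in W:\mathfrak{a}w=0\}$ is nonzero, and since $\mathfrak{a}$ is an ideal it is an $L$-submodule; irreducibility forces $W_{0}=W$, i.e.\ $\mathfrak{a}W=0$, and the induction yields $\mathfrak{g}\otimes N=0$ on $W$.

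I expect this triviality of the nilpotent part to be the main obstacle: it is precisely the new phenomenon produced by repeated roots of the $p_{i}$, with no counterpart in the reduced (distinct-root) situation, and it is exactly where the integrability hypothesis must be used in an essential way. Once $W$ is known to be finite-dimensional, Proposition~\ref{pro:3.4} delivers the tensor product decomposition with finite-dimensional irreducible $\mathfrak{g}$-modules $U_{i}$ at distinct tuples $\underline{z}_{i}$, and combined with the first paragraph this establishes the claimed classification ``up to isomorphism.''
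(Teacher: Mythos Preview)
Your argument is correct but takes a different route from the paper. The paper never isolates the nilradical of $R$: once the annihilator $I$ is seen to contain $\mathfrak{g}\otimes\langle p_0(t_0),\dots,p_r(t_r)\rangle$, so that $\tau/I$ is finite-dimensional, it invokes Lemma~\ref{lem:2.3} to obtain a basis $\{a_1,\dots,a_l\}$ of $\mathfrak{g}$ with the property that, for each fixed $i$, the operators $a_i(m_0,\underline{m})$ pairwise commute and act locally nilpotently on $W$; a PBW factorization $U(\tau/I)w=U(A_1)\cdots U(A_l)w$ through the abelian spans $A_i=a_i\otimes R$ then shows $W$ is finite-dimensional in one stroke, and Proposition~\ref{pro:3.4} finishes. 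Your descending induction on powers of $N$ and the conjugation argument are valid and conceptually transparent, but longer---the conjugation step essentially reproduces the content of Lemma~\ref{lem:2.3} in the special case $n^2=0$---and the final appeal to the ``classical'' theory for $\mathfrak{g}^{\oplus m}$ hides a small step you should make explicit: $\tau$-integrability only gives a weight decomposition for the diagonal $\mathfrak{h}\hookrightarrow\mathfrak{h}^{\oplus m}$, so one first needs the $\mathfrak{sl}_2$-triples $(x_\alpha\otimes e_i,\,x_{-\alpha}\otimes e_i,\,h_\alpha\otimes e_i)$ to upgrade to semisimplicity of the full Cartan before the finite-dimensional classification applies. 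What your approach buys is a self-contained structural argument that avoids the somewhat opaque Lemma~\ref{lem:2.3}, and you also treat the forward direction (membership, integrability, irreducibility of the evaluation modules) explicitly, which the paper leaves to Lemma~\ref{lem:3.3} and the ambient references.
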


\begin{proof}Let $W$ be an irreducible integrable $\tau$-module in the category
          $\mathcal{E}_{\tau}$. Then there exist nonzero
          polynomials $p_{i}(x)$ such that
          $a\otimes p_{i}(t_{i})\mathbb{C}[t_{0}^{\pm},\dots,t_{r}^{\pm}]W=0$
          for $a\in \mathfrak{g}, i = 0,1,\dots,r.$ Let $I$ be the annihilating
          ideal of the $\tau$-module $W$. Since
          $a\otimes \langle p_{0}(t_{0}),\cdots, p_{r}(t_{r})\rangle\subset I$ for all $a\in \mathfrak{g}$,
it follows that $\tau/I$ is finite dimensional.
          By Lemma \ref{lem:2.3}, there is a basis $\{a_{1},\dots,a_{l}\}$ of $\mathfrak{g}$ such that
          $a_{i}(m_{0},\underline{m})$ acts locally nilpotently on $W$
          for any $i\in\{1,\dots,l\}$.
          Let $0\neq w\in W$. We have $W=U(\tau)w=U(\tau/I)w$ by the irreducibility of $W$.
          From the PBW theorem,  $W$ is finite dimensional. It then follows from Proposition \ref{pro:3.4}.
\end{proof}

\section{Category $\mathcal{C}_{\tau}$ of $\tau$-modules}\label{sec 4}
\def\theequation{4.\arabic{equation}}
\setcounter{equation}{0}

In this section, we define and study a category $\mathcal{C}_{\tau}$
of modules for the toroidal Lie algebra $\tau$, we also introduce a
category $\mathcal{\widetilde{R}}$ of $\tau$-modules and a category
$\mathcal{E}_{\tau}^{'}$ of $\tau$-modules, where
$\mathcal{\widetilde{R}}$ is a subcategory of the category of
restricted $\tau$-modules and $\mathcal{E}_{\tau}^{'}$ is a
subcategory of the category $\mathcal{E}_{\tau}$. We prove that
every irreducible $\tau$-module in category $\mathcal{C}_{\tau}$ is
isomorphic to the tensor product of modules from categories
$\mathcal{\widetilde{R}}$ and $\mathcal{E}_{\tau}^{'}$.

\begin{definition}\label{t1} We define category
                $\mathcal{C}_{\tau}$ to consist of $\tau$-modules $W$ for which there exist nonzero polynomials
                $p_{i}(x)$ for
                $0\le i\le r$ such that that the nonzero roots of $p_{i}(x)$ for $1\le i\le r$ are multiplicity-free,
                $$ p_{0}(x_{0})a(x_{0},\underline{x})\in
                {Hom}(W,W[[x_{1}^{\pm 1},\dots,x_{r}^{\pm 1}]]((x_{0})))
                $$ for $a\in \mathfrak{g}$,
                and $$ p_{i}(x_{i})a(x_{0},\underline{x})w = 0, \;\; p_{i}(x_{i})K_{0}(\underline{x})w=0
                $$
for $1\le i\le r$, $w\in W.$
\end{definition}

\begin{definition}\label{} We define category
                $\mathcal{\widetilde{R}}$  to consist of $\tau$-modules $W$
                for which there exist nonzero polynomials $p_{i}(x)$  for $1\le i\le r$
                such that the nonzero roots of
$p_{i}(x)$  are multiplicity-free,
                $$ a(x_{0},\underline{x})\in
                {Hom}(W,W[[x_{1}^{\pm},\dots,x_{r}^{\pm}]]((x_{0})))
                $$ for $a\in \mathfrak{g}$,
                and $$ p_{i}(x_{i})a(x_{0},\underline{x})w = 0,\;\;
                 p_{i}(x_{i})K_{0}(\underline{x})w =0$$ for $1\le i\le r$, $w\in W.$
\end{definition}

\begin{definition}\label{}
          We define a category $\mathcal{E}^{'}_{\tau}$ to consist of $\tau$-modules
          $W$ for which there exist nonzero polynomials $p_{i}(x)\in\mathbb{C}[x]$
          for $0\le i\le r$ such that the
      nonzero roots of $p_{i}(x)$ for $1\le i\le r$ are multiplicity-free and
          $$ p_{i}(x_{i})a(x_{0},\underline{x})w = 0 $$
          for $0\le i\le r, a\in \mathfrak{g}, w\in W.$
          \end{definition}

          \begin{rmk}\label{rmk:4.1} From the proof of Proposition
          3.9 in \cite{L1} and Proposition \ref{pro:3.5}, we see
          that irreducible integrable $\tau$-modules in category
          $\mathcal{E}^{'}_{\tau}$ are exactly those irreducible integrable $\tau$-modules in category
          $\mathcal{E}_{\tau}$, i.e., irreducible integrable $\tau$-modules in category
$\mathcal{E}_{\tau}^{'}$ up to isomorphism are the evaluation
modules $U_{1}(\underline{z}_{1})\otimes\cdots\otimes
U_{s}(\underline{z}_{s})$, where  $U_{1},\dots,U_{s}$ are finite
dimensional irreducible $\mathfrak{g}$-modules and
$\underline{z}_{i} =
(z_{0i},z_{1i},\dots,z_{ri})\in(\mathbb{C}^{*})^{r+1}$  for $1\le
i\le s$ are distinct $(r+1)$-tuples. \end{rmk}

It is obvious that every $\tau$-module in category
$\mathcal{\widetilde{R}}$ and every $\tau$-module in category
$\mathcal{E}_{\tau}^{'}$ are in $\mathcal{C}_{\tau}$ and the tensor
products of $\tau$-modules from $\mathcal{\widetilde{R}}$
and $\mathcal{E}_{\tau}^{'}$ are in $\mathcal{C}_{\tau}.$


From the proof of Lemma \ref{lem:3.2}, we obtain

\begin{lem}The central elements $K_{i}$ for $1\le i\le r$ act trivially on
        every module from categories $\mathcal{C}_{\tau}$ and
         $\mathcal{\widetilde{R}}$.
\end{lem}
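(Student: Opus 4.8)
The plan is to reuse, essentially verbatim, the computation in the second part of the proof of Lemma \ref{lem:3.2}, noting that at each index $s\in\{1,\dots,r\}$ the only input that argument requires is the relation $p_{s}(x_{s})a(x_{0},\underline{x})w=0$. For a module $W$ in either $\mathcal{C}_{\tau}$ or $\mathcal{\widetilde{R}}$ there exist nonzero polynomials $p_{1},\dots,p_{r}$ with $p_{i}(x_{i})a(x_{0},\underline{x})w=0$ for all $a\in\mathfrak{g}$, $w\in W$ and $1\le i\le r$; the extra index-$0$ data (the polynomial $p_{0}$ in $\mathcal{C}_{\tau}$, or the lower-truncation condition in $\mathcal{\widetilde{R}}$) is irrelevant for detecting $K_{s}$ with $s\ge 1$, so the two categories are handled simultaneously.

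First I would fix $s\in\{1,\dots,r\}$ and dispose of the degenerate case in which $p_{s}$ is a nonzero constant: then $a(x_{0},\underline{x})w=0$, i.e.\ $a(n_{0},\underline{n})w=0$ for all $a,n_{0},\underline{n}$, and substituting this into the bracket relation (\ref{eq:2.2}) with $n_{0}=0$ isolates the term $\langle a,b\rangle\delta_{\underline{n}+\underline{m},0}\sum_{i=1}^{r}n_{i}K_{i}$; choosing $n_{j}=\delta_{sj}$ then forces $K_{s}=0$, exactly as in the first case of Lemma \ref{lem:3.2}.

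In the main case $p_{s}'\neq 0$, I would pick $a,b\in\mathfrak{g}$ with $\langle a,b\rangle=1$, multiply the generating-function relation (\ref{eq:2.3}) by $p_{s}(x_{s})p_{s}(y_{s})$, and apply it to $w$. The left-hand side vanishes because $p_{s}(x_{s})a(x_{0},\underline{x})w=0$ while $p_{s}(x_{s})$ commutes with $b(y_{0},\underline{y})$, and the non-central term $[a,b](y_{0},\underline{y})\prod_{i=0}^{r}y_{i}^{-1}\delta(x_{i}/y_{i})$ on the right also dies since $[a,b]\in\mathfrak{g}$ gives $p_{s}(y_{s})[a,b](y_{0},\underline{y})w=0$. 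Applying $\mathrm{Res}_{y_{0}}\mathrm{Res}_{y_{1}}\cdots\mathrm{Res}_{y_{r}}$ to the remaining purely central identity, the $K_{0}$-term drops out via $\mathrm{Res}_{y_{0}}(\tfrac{\partial}{\partial y_{0}}y_{0}^{-1}\delta(x_{0}/y_{0}))=0$; in the sum $\sum_{j=1}^{r}K_{j}(\cdots)$ every summand with $j\neq s$ is annihilated by $\mathrm{Res}_{y_{j}}$, again because $\mathrm{Res}_{y_{j}}(\tfrac{\partial}{\partial y_{j}}y_{j}^{-1}\delta(x_{j}/y_{j}))=0$, so only the $j=s$ summand survives and $\mathrm{Res}_{y_{s}}$ contributes the factor $-p_{s}(x_{s})p_{s}'(x_{s})$. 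The upshot is $K_{s}\,p_{s}(x_{s})p_{s}'(x_{s})\,x_{0}^{-1}x_{1}^{-1}\cdots x_{s-1}^{-1}x_{s+1}^{-1}\cdots x_{r}^{-1}=0$, and since $p_{s}(x)p_{s}'(x)\neq 0$ we conclude $K_{s}=0$ on $W$.

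Since everything runs parallel to Lemma \ref{lem:3.2}, the one point I would check carefully --- and the only real (if minor) obstacle --- is the claim that multiplying (\ref{eq:2.3}) by $p_{s}(x_{s})p_{s}(y_{s})$ kills both the left-hand side and the non-central right-hand term on $w$, leaving a clean central identity. This rests on two elementary observations: $p_{s}(x_{s})$ involves only $x_{s}$, so it commutes past $b(y_{0},\underline{y})$ and the hypothesis $p_{s}(x_{s})a(x_{0},\underline{x})w=0$ applies directly; and $[a,b]$ again lies in $\mathfrak{g}$, so the same annihilation hypothesis applies to it. The accompanying residue bookkeeping --- that exactly the $j=s$ term survives with the nonzero coefficient $p_{s}(x_{s})p_{s}'(x_{s})$ --- is then routine.
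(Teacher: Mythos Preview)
Your proposal is correct and follows exactly the approach the paper intends: the paper's own proof is the single line ``From the proof of Lemma \ref{lem:3.2}, we obtain,'' and you have correctly isolated that, for each fixed $s\in\{1,\dots,r\}$, the residue computation (\ref{eq:3.3}) uses only the hypothesis $p_{s}(x_{s})a(x_{0},\underline{x})w=0$, which is available in both $\mathcal{C}_{\tau}$ and $\mathcal{\widetilde{R}}$. Your treatment of the degenerate constant-$p_{s}$ case and your verification that the $K_{0}$-term drops out via $\mathrm{Res}_{y_{0}}$ (so that the extra condition $p_{i}(x_{i})K_{0}(\underline{x})w=0$ is not needed here) are both accurate.
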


Next, we define some vector spaces of formal series which will be used
later.

\begin{definition}\label{def:4.4}
Let $W$ be any vector space. Following \cite{LTW} we set
$$\mathcal{E}(W,r)= Hom(W,W[[x_{1}^{\pm 1},\dots,x_{r}^{\pm
 1}]]((x_{0}))).$$
 Define $\overline{\mathcal{E}}(W,r)$ to be the subspace of
            (End$W$)$[[x_{0}^{\pm 1},\dots,x_{r}^{\pm 1}]]$,
            consisting of formal series $\alpha(x_{0},\underline{x})$ satisfying the condition that
            there exist nonzero
            polynomials $p_{0}(x),\dots,p_{r}(x)$
            such that  the
            nonzero roots of $p_{i}(x)$ for $1\le i\le r$ are
            multiplicity-free,
            $$p_{0}(x_{0})\alpha(x_{0},\underline{x})\in\mathcal{E}(W,r),\ \mbox{ and }\
            p_{i}(x_{i})\alpha (x_{0},\underline{x})w = 0$$
            for $1\le i\le r,\ w\in W$.
            Define  $\widetilde{\mathcal{E}}(W,r)$ to be the subspace of
            (End$W$)$[[x_{0}^{\pm 1},\dots,x_{r}^{\pm 1}]]$,
            consisting of formal series $\alpha (x_{0},\underline{x})$
            satisfying the condition that  there exist
            nonzero polynomials $p_{1}(x),\dots,p_{r}(x)$ whose nonzero roots are
            multiplicity-free such that
            $$\alpha(x_{0},\underline{x})\in\mathcal{E}(W,r),\ \
           p_{i}(x_{i})\alpha(x_{0},\underline{x})w =0$$
           for $1\le i\le r,\ w\in W$.
            Furthermore, define $\overline{\mathcal{E}_{0}}(W,r)$ to be the subspace
            of $\overline{\mathcal{E}}(W,r)$ consisting of
            formal series $\alpha (x_{0},\underline{x})$
            satisfying the condition that there exist nonzero
            polynomials $p_{0}(x),\dots,p_{r}(x)$
            such that the nonzero roots of $p_{i}(x)$  for $1\le i\le r$ are
            multiplicity-free and
            $p_{i}(x_{i})\alpha(x_{0},\underline{x})w = 0 $
            for $0\le i\le r$, $w\in W$.
\end{definition}

\begin{definition}\label{def:4.5} For a vector space $W$, we define a linear map
             $$ \psi_{\widetilde{\mathcal{R}}}: \overline{\mathcal{E}}(W,r)
             \rightarrow \widetilde{\mathcal{E}}(W,r)$$
             by

\begin{equation}\label{eq:4.1} \psi_{\widetilde{\mathcal{R}}}(\alpha(x_{0},\underline{x}))w
            = l_{x_{0};0}(f(x_{0})^{-1})(f(x_{0})\alpha(x_{0},\underline{x})w), \;\;
\end{equation}
for $\alpha(x_{0},\underline{x})\in\overline{\mathcal{E}}(W,r),w\in W$,
           where $f(x)$ is any nonzero polynomial such that\\
            $f(x_{0})\alpha(x_{0},\underline{x})\in\mathcal{E}(W,r).$
\end{definition}

Notice that
\begin{eqnarray}
(End W)[[x_{1}^{\pm 1},\dots,x_{r}^{\pm
 1}]]\subset \mathcal{E}(W,r)\subset \overline{\mathcal{E}}(W,r).
 \end{eqnarray}
Thus for any $\beta(\underline{x})\in (End W)[[x_{1}^{\pm 1},\dots,x_{r}^{\pm
 1}]]$, we have
$$\psi_{\widetilde{\mathcal{R}}}(\beta(\underline{x}))=\beta(\underline{x}).$$

\begin{rmk}\label{rmk:4.2}  Just as in \cite{L1}, one can show that $\psi_{\widetilde{\mathcal{R}}}$ is well defined. From definition, for $\alpha(x_{0},\underline{x})\in\overline{\mathcal{E}}(W,r)$,
        we have
        $ f_{0}(x_{0})\psi_{\widetilde{\mathcal{R}}}(\alpha(x_{0},\underline{x}))
         = f_{0}(x_{0})\alpha(x_{0},\underline{x}) $ for
           any nonzero polynomial $f_{0}(x_{0})$ such that
            $f_{0}(x_{0})\alpha(x_{0},\underline{x})\in\mathcal{E}(W,r).$
            Furthermore, if $f_{i}(x_{i})$ for $1\le i\le r$ are nonzero
polynomials such that  $f_{i}(x_{i})\alpha(x_{0},\underline{x})=0$,
then $f_{i}(x_{i})\psi_{\widetilde{R}}(\alpha(x_{0},\underline{x}))=0$
 for
$1\le i\le r.$
\end{rmk}

\begin{prop}\label{prop:4.8}For any vector space $W$, we have
     \begin{equation}\label{eq:4.2} \overline{\mathcal{E}}(W,r) = \widetilde{\mathcal{E}}(W,r)
            \oplus \overline{\mathcal{E}_{0}}(W,r)
             \end{equation}
     Furthermore,
     $$\psi_{\widetilde{\mathcal{R}}}|_{\widetilde{\mathcal{E}}(W,r)} = 1\ \mbox{ and }\
        \psi_{\widetilde{\mathcal{R}}}|_{\overline{\mathcal{E}_{0}}(W,r)} = 0.$$
 \end{prop}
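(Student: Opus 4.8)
The plan is to recognize $\psi_{\widetilde{\mathcal{R}}}$ as the projection of $\overline{\mathcal{E}}(W,r)$ onto the summand $\widetilde{\mathcal{E}}(W,r)$ along $\overline{\mathcal{E}_{0}}(W,r)$, and to read the whole statement off from its restriction to the two candidate summands together with the identities recorded in Remark \ref{rmk:4.2}. First I would note the two inclusions $\widetilde{\mathcal{E}}(W,r)\subseteq\overline{\mathcal{E}}(W,r)$ and $\overline{\mathcal{E}_{0}}(W,r)\subseteq\overline{\mathcal{E}}(W,r)$: the latter holds by definition, while for the former one simply takes the constant polynomial $p_{0}=1$, which is admissible since it has no nonzero roots and since $\alpha\in\mathcal{E}(W,r)$ already.

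Next I would compute the two restrictions directly from Definition \ref{def:4.5}, exploiting the freedom in the choice of the auxiliary polynomial $f$. For $\alpha\in\widetilde{\mathcal{E}}(W,r)$ we have $\alpha\in\mathcal{E}(W,r)$, so $f=1$ is a legitimate choice; then $\psi_{\widetilde{\mathcal{R}}}(\alpha)w=l_{x_{0};0}(1)(\alpha w)=\alpha w$, whence $\psi_{\widetilde{\mathcal{R}}}|_{\widetilde{\mathcal{E}}(W,r)}=1$. For $\alpha\in\overline{\mathcal{E}_{0}}(W,r)$ there is a nonzero $p_{0}$ with $p_{0}(x_{0})\alpha(x_{0},\underline{x})w=0$; taking $f=p_{0}$ gives $f\alpha=0\in\mathcal{E}(W,r)$, so $\psi_{\widetilde{\mathcal{R}}}(\alpha)w=l_{x_{0};0}(p_{0}^{-1})\cdot 0=0$, i.e.\ $\psi_{\widetilde{\mathcal{R}}}|_{\overline{\mathcal{E}_{0}}(W,r)}=0$. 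These two facts already force directness of the sum: if $\alpha\in\widetilde{\mathcal{E}}(W,r)\cap\overline{\mathcal{E}_{0}}(W,r)$ then $\alpha=\psi_{\widetilde{\mathcal{R}}}(\alpha)=0$.

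It remains to prove $\overline{\mathcal{E}}(W,r)=\widetilde{\mathcal{E}}(W,r)+\overline{\mathcal{E}_{0}}(W,r)$. Given $\alpha\in\overline{\mathcal{E}}(W,r)$, I would write
$$\alpha=\psi_{\widetilde{\mathcal{R}}}(\alpha)+\bigl(\alpha-\psi_{\widetilde{\mathcal{R}}}(\alpha)\bigr),$$
where $\psi_{\widetilde{\mathcal{R}}}(\alpha)\in\widetilde{\mathcal{E}}(W,r)$ by the construction of the map. Since $\overline{\mathcal{E}}(W,r)$ is a subspace and $\widetilde{\mathcal{E}}(W,r)\subseteq\overline{\mathcal{E}}(W,r)$, the remainder $\gamma:=\alpha-\psi_{\widetilde{\mathcal{R}}}(\alpha)$ again lies in $\overline{\mathcal{E}}(W,r)$, and I must check that in fact $\gamma\in\overline{\mathcal{E}_{0}}(W,r)$, i.e.\ that $\gamma$ is annihilated by suitable $p_{i}(x_{i})$ for all $0\le i\le r$. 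Let $p_{0},\dots,p_{r}$ be polynomials witnessing $\alpha\in\overline{\mathcal{E}}(W,r)$, so that $p_{0}(x_{0})\alpha\in\mathcal{E}(W,r)$ and $p_{i}(x_{i})\alpha w=0$ for $1\le i\le r$. For $1\le i\le r$, the second part of Remark \ref{rmk:4.2} gives $p_{i}(x_{i})\psi_{\widetilde{\mathcal{R}}}(\alpha)=0$, hence $p_{i}(x_{i})\gamma w=0$; for the index $0$, the first part of Remark \ref{rmk:4.2} (applied with $f_{0}=p_{0}$) gives $p_{0}(x_{0})\psi_{\widetilde{\mathcal{R}}}(\alpha)=p_{0}(x_{0})\alpha$, hence $p_{0}(x_{0})\gamma=0$. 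Thus the same polynomials $p_{0},\dots,p_{r}$ witness $\gamma\in\overline{\mathcal{E}_{0}}(W,r)$, completing the decomposition.

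The only genuinely substantive input is the identity $p_{0}(x_{0})\psi_{\widetilde{\mathcal{R}}}(\alpha)=p_{0}(x_{0})\alpha$ used in the last step; unwound, it amounts to the fact that $p_{0}(x_{0})\,l_{x_{0};0}(p_{0}(x_{0})^{-1})=1$ in $\mathbb{C}((x_{0}))$, together with the well-definedness of $\psi_{\widetilde{\mathcal{R}}}$ (its independence of the chosen $f$). Both are precisely what Remark \ref{rmk:4.2} supplies, so the main obstacle is entirely localized there; once it is granted, the decomposition and the two restriction formulas follow formally, as above. I expect no difficulty peculiar to the toroidal setting, since the extra variables $x_{1},\dots,x_{r}$ enter only through the annihilation conditions, and these are transported verbatim by the second part of Remark \ref{rmk:4.2}.
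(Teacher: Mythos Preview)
Your proposal is correct and follows essentially the same approach as the paper's own proof: compute the two restrictions by choosing $f=1$ and $f=p_{0}$ respectively, deduce directness, and then use Remark \ref{rmk:4.2} to show that $\alpha-\psi_{\widetilde{\mathcal{R}}}(\alpha)\in\overline{\mathcal{E}_{0}}(W,r)$ via the same witnessing polynomials $p_{0},\dots,p_{r}$. Your write-up is in fact slightly more explicit than the paper's in spelling out the inclusions $\widetilde{\mathcal{E}}(W,r)\subseteq\overline{\mathcal{E}}(W,r)$ and in deriving directness from the two restriction formulas, but the logical content is identical.
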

 \begin{proof} Let $\alpha(x_{0},\underline{x})\in\widetilde{\mathcal{E}}(W,r)$.
             By Definition \ref{def:4.4}, we know $\alpha(x_{0},\underline{x})\in\mathcal{E}(W,r)$.
              Taking $f_{0}(x)=1$ in Definition \ref{def:4.5},
             we have $\psi_{\widetilde{\mathcal{R}}}(\alpha(x_{0},\underline{x}))w
                            = \alpha(x_{0},\underline{x})w$ for any $w\in
                            W$,
             that is, $\psi_{\widetilde{\mathcal{R}}}(\alpha(x_{0},\underline{x}))
                            = \alpha(x_{0},\underline{x})$.
                            Therefore $\psi_{\widetilde{\mathcal{R}}}|_{\widetilde{\mathcal{E}}(W,r)} = 1$.

             Let $\alpha(x_{0},\underline{x})\in\overline{\mathcal{E}_{0}}(W,r)$. By definition there
             exist nonzero polynomials $p_{i}(x)$ for $0\le i\le r$
             such that the
            nonzero roots of $p_{i}(x)$ for $1\le i\le r$ are
            multiplicity-free  and
             $p_{i}(x_{i})\alpha(x_{0},\underline{x})w = 0 $ for $w\in W$.
            Then by (\ref{eq:4.1}) in Definition \ref{def:4.5}, we have
            $$\psi_{\widetilde{\mathcal{R}}}(\alpha(x_{0},\underline{x}))w
            = l_{x_{0};0}(p_{0}(x_{0})^{-1})(p_{0}(x_{0})\alpha(x_{0},\underline{x})w)=0.$$
            Thus
            $\psi_{\widetilde{\mathcal{R}}}(\alpha(x_{0},\underline{x}))=0$,
            i.e., $\psi_{\widetilde{\mathcal{R}}}|_{\overline{\mathcal{E}_{0}}(W,r)} =
            0.$ It is clear that
            $\widetilde{\mathcal{E}}(W,r)+\overline{\mathcal{E}_{0}}(W,r)$
            is a direct sum.

            Let $\alpha(x_{0},\underline{x})\in\overline{\mathcal{E}}(W,r)$
            and let $f_{i}(x)$ for $0\le i\le r$ be nonzero polynomials such that
            the nonzero roots of $f_{i}(x)$ for $1\le i\le r$ are
            multiplicity-free,
            $f_{0}(x_{0})\alpha(x_{0},\underline{x})\in\mathcal{E}(W,r)$,
            $f_{i}(x_{i})\alpha(x_{0},\underline{x})=0$ for $1\le i\le r$.
            From Remark \ref{rmk:4.2}, we have
            $f_{i}(x_{i})(\alpha(x_{0},\underline{x})-
                \psi_{\widetilde{\mathcal{R}}}(\alpha(x_{0},\underline{x})))=0$
            for $0\le i\le r$. Thus
             $\alpha(x_{0},\underline{x})-\psi_{\widetilde{\mathcal{R}}}(\alpha(x_{0},\underline{x}))
                  \in\overline{\mathcal{E}_{0}}(W,r)$,
                  which implies $\alpha(x_{0},\underline{x})\in\widetilde{\mathcal{E}}(W,r)\oplus
                                         \overline{\mathcal{E}_{0}}(W,r)$.
This proves $\overline{\mathcal{E}}(W,r) \subset
\widetilde{\mathcal{E}}(W,r) + \overline{\mathcal{E}_{0}}(W,r)$.
                          Therefore, we have (\ref{eq:4.2}),
                          and the proof is completed.
 \end{proof}

In what follows, we denote by $\psi_{\mathcal{E}^{'}}$ the
            projection map of $\overline{\mathcal{E}}(W,r)$ onto $\overline{\mathcal{E}_{0}}(W,r)$
            with respect to the decomposition (\ref{eq:4.2}). For $\alpha(x_{0},\underline{x})
            \in \overline{\mathcal{E}}(W,r)$, we set
            $$\widetilde{\alpha}(x_{0},\underline{x})=
            \psi_{\widetilde{\mathcal{R}}}(\alpha(x_{0},\underline{x})),$$
           $$ \check{\alpha}(x_{0},\underline{x})=\psi_{\mathcal{E}^{'}}(\alpha(x_{0},\underline{x}))
           =\alpha(x_{0},\underline{x})-\widetilde{\alpha}(x_{0},\underline{x}).$$

To prove our main result we need the following lemmas:

\begin{lem}\label{lem:4.10} For $\alpha(x_{0},\underline{x})\in \overline{\mathcal{E}}(W,r),
        n_{0}\in\mathbb{Z},\underline{n}\in\mathbb{Z}^{r},w\in W$,
        we have
        $$\psi_{\widetilde{\mathcal{R}}}(\alpha(x_{0},\underline{x}))(n_{0},\underline{n})w
        =\sum_{i=0}^{l}\beta_{i}\alpha(n_{0}+i,\underline{n})w  $$
        for some $l\in\mathbb{N},\beta_{1},\dots,\beta_{l}\in\mathbb{C}$,
        depending on $\alpha(x_{0},\underline{x}),w$ and $n_{0}$, $\underline{n}$,
        where $$\psi_{\widetilde{\mathcal{R}}}(\alpha(x_{0},\underline{x}))
        =\sum_{n_{0}\in\mathbb{Z},\underline{n}\in\mathbb{Z}^{r}}\psi_{\widetilde{\mathcal{R}}}(\alpha(x_{0},\underline{x}))
        (n_{0},\underline{n})x_{0}^{-n_{0}-1}x^{-\underline{n}-1}$$
\end{lem}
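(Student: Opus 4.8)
The plan is to unwind Definition \ref{def:4.5} and compute the coefficient of $x_0^{-n_0-1}x^{-\underline{n}-1}$ directly, the crucial preliminary being to choose the auxiliary polynomial $f$ so that $f(0)\neq 0$. Since $\alpha(x_0,\underline{x})\in\overline{\mathcal{E}}(W,r)$, there is a nonzero $p_0(x)$ with $p_0(x_0)\alpha(x_0,\underline{x})\in\mathcal{E}(W,r)$. Writing $p_0(x)=x^{k}\tilde{p}_0(x)$ with $\tilde{p}_0(0)\neq 0$, I would first observe that multiplication by $x_0^{-k}$ preserves lower truncation in $x_0$, so that $\tilde{p}_0(x_0)\alpha(x_0,\underline{x})w = x_0^{-k}\bigl(p_0(x_0)\alpha(x_0,\underline{x})w\bigr)\in W[[x_1^{\pm 1},\dots,x_r^{\pm 1}]]((x_0))$ for every $w$; hence $\tilde{p}_0(x_0)\alpha(x_0,\underline{x})\in\mathcal{E}(W,r)$ as well. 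By the independence of $\psi_{\widetilde{\mathcal{R}}}$ on the choice of $f$ (Remark \ref{rmk:4.2}), I may then take $f=\tilde{p}_0$, for which $f(0)\neq 0$.

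With such an $f$, the expansion $l_{x_0;0}(f(x_0)^{-1})$ is an ordinary power series $\sum_{m\ge 0}d_m x_0^{m}$ with $d_0=f(0)^{-1}$. Fixing $\underline{n}$ and extracting the coefficient of $x^{-\underline{n}-1}$, I would write $f(x)=\sum_{j}c_j x^{j}$, so that the corresponding coefficient of $f(x_0)\alpha(x_0,\underline{x})w$ is $\sum_{m'}b_{m'}x_0^{-m'-1}$ with $b_{m'}=\sum_{j}c_j\,\alpha(m'+j,\underline{n})w$. Because $f(x_0)\alpha(x_0,\underline{x})w$ lies in $\mathcal{E}(W,r)$ it is lower truncated in $x_0$, so there is an $M_0$ (depending on $w$ and $\underline{n}$) with $b_{m'}=0$ for $m'>M_0$.

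Multiplying $\sum_{m\ge 0}d_m x_0^m$ by $\sum_{m'}b_{m'}x_0^{-m'-1}$ and reading off the coefficient of $x_0^{-n_0-1}$ yields $\sum_{m'=n_0}^{M_0}d_{m'-n_0}b_{m'}$: the lower limit $m'\ge n_0$ comes from $d_m=0$ for $m<0$ (this is exactly where $f(0)\neq 0$ enters), and the upper limit from $b_{m'}=0$ for $m'>M_0$, so the sum is finite. Substituting $b_{m'}=\sum_j c_j\alpha(m'+j,\underline{n})w$ and regrouping by the value $p=m'+j$, which runs over $n_0\le p\le M_0+\deg f$, I obtain $\psi_{\widetilde{\mathcal{R}}}(\alpha(x_0,\underline{x}))(n_0,\underline{n})w=\sum_{i=0}^{l}\beta_i\,\alpha(n_0+i,\underline{n})w$ with $l=M_0+\deg f-n_0$ and $\beta_i$ determined by the $c_j$ and $d_m$; all shifts are nonnegative precisely because $f(0)\neq 0$ forced $l_{x_0;0}(f(x_0)^{-1})$ to have no negative powers of $x_0$.

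The only genuine subtlety, and the step I would treat as the main obstacle, is the reduction to an $f$ with $f(0)\neq 0$: without it the expansion $l_{x_0;0}(f(x_0)^{-1})$ would contribute negative powers of $x_0$ and hence shifts $\alpha(n_0+i,\underline{n})$ with $i<0$, failing to match the asserted form. Everything beyond that is finite bookkeeping of coefficients, with finiteness of the sum guaranteed by the lower truncation built into membership in $\mathcal{E}(W,r)$.
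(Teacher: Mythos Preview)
Your proposal is correct and follows essentially the same approach as the paper: choose $f$ with $f(0)\neq 0$ so that $l_{x_0;0}(f(x_0)^{-1})\in\mathbb{C}[[x_0]]$, then use the lower truncation of $f(x_0)\alpha(x_0,\underline{x})w$ in $x_0$ to see that the coefficient extraction yields a finite sum with only nonnegative shifts. The paper phrases the coefficient extraction via $\mathrm{Res}_{x_0}$ rather than direct bookkeeping, but this is cosmetic; if anything, your argument is slightly more explicit in justifying that one may assume $f(0)\neq 0$, a point the paper simply asserts.
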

\begin{proof} Since $\alpha(x_{0},\underline{x})\in \overline{\mathcal{E}}(W,r)$, there exists a polynomial  $p_{0}(x)$ such that $p_{0}(0)\neq 0$
              and $p_{0}(x_{0})\alpha(x_{0},\underline{x})\in\mathcal{E}(W,r)$.
              Let $k$ be a nonnegative integer such that
              $$x_{0}^{k}p_{0}(x_{0})\alpha(x_{0},\underline{x})w\in W[[x_{1}^{\pm 1},\dots,x_{r}^{\pm
              1}]][[x_{0}]].$$
             Set
              $$l_{x_{0};0}\left(\frac{1}{p_{0}(x_{0})}\right)=\sum_{i\geq 0}\alpha_{i}x_{0}^{i}\in\mathbf{C}[[x_{0}]]. $$
              Note that
              Res$_{x_{0}}x_{0}^{k+m}p_{0}(x_{0})\alpha(x_{0},\underline{x})w=0$ for $m\geq 0$.
              We have
              \begin{eqnarray*}
              \qquad\qquad \sum_{\underline{n}\in\mathbb{Z}^{r}}\psi_{\widetilde{\mathcal{R}}}
                          (\alpha(x_{0},\underline{x}))(n_{0},\underline{n})x^{-\underline{n}-1}w
               & = & \mbox{Res}_{x_{0}}x_{0}^{n_{0}}\psi_{\widetilde{\mathcal{R}}}(\alpha(x_{0},\underline{x}))w     \\
               & = & \mbox{Res}_{x_{0}}\sum_{0\leq i\leq k-n_{0}-1}
                       \alpha_{i}x_{0}^{n_{0}+i}(p_{0}(x_{0})\alpha(x_{0},\underline{x})w)\\
               & = & \sum_{\underline{n}\in\mathbb{Z}^{r}}
                 \sum_{i=0}^{l}\beta_{i}\alpha(n_{0}+i,\underline{n})x^{-\underline{n}-1}w.   \qquad\qquad\:\qquad\qquad
               \end{eqnarray*}
             Then it follows immediately that
             $\psi_{\widetilde{\mathcal{R}}}(\alpha(x_{0},\underline{x}))(n_{0},\underline{n})
             =\sum\limits_{i=0}^{l}\beta_{i}\alpha(n_{0}+i,\underline{n})$.
\end{proof}

The following delta function properties can be found in \cite{L2}, \cite{LL}:
For $m>n\geq 0$, we have
 \begin{equation}\label{eq:4.3}
 (x_{1}-x_{2})^{m}\left(\frac{\partial}{\partial x_{2}}\right)^{n}
   x_{2}^{-1}\delta\left(\frac{x_{1}}{x_{2}}\right)=0,
 \end{equation}
 and for $0\leq m\leq n$, we have
   \begin{equation}\label{eq:4.4}(x_{1}-x_{2})^{m}\frac{1}{n!}\left(\frac{\partial}{\partial x_{2}}\right)^{n}x_{2}^{-1}\delta\left(\frac{x_{1}}{x_{2}}\right)=
   \frac{1}{(n-m)!}\left(\frac{\partial}{\partial x_{2}}\right)^{n-m}
   x_{2}^{-1}\delta\left(\frac{x_{1}}{x_{2}}\right).
   \end{equation}

\begin{lem}\label{lem:4.11} Let $W$ be any vector space, let
          $\alpha(x_{0},\underline{x}),\beta(x_{0},\underline{x})\in\overline{\mathcal{E}}(W,r)$, and let\\
          $\gamma_{0}(x_{0},\underline{x}),\dots,\gamma_{n}(x_{0},\underline{x})$
          be formal series in (End $W$)$[[x_{0}^{\pm 1},\dots,x_{r}^{\pm 1}]]$ such that on $W$,
         \begin{equation}\label{eq:4.5}
         [\alpha(x_{0},\underline{x}),\beta(y_{0},\underline{y})]=
           \sum_{j=0}^{n}\frac{1}{j!}\gamma_{j}(y_{0},\underline{y})
           \left(\frac{\partial}{\partial y_{0}}\right)^{j}x_{0}^{-1}
           \delta\left(\frac{y_{0}}{x_{0}}\right)
           \prod_{i=1}^{r}x_{i}^{-1}\delta\left(\frac{y_{i}}{x_{i}}\right).
          \end{equation}

           Then
           $\gamma_{0}(x_{0},\underline{x}),\dots,\gamma_{n}(x_{0},\underline{x})\in\overline{\mathcal{E}}(W,r)$,
           and
           \begin{equation}\label{eq:4.6}
           [\widetilde{\alpha}(x_{0},\underline{x}),\widetilde{\beta}(y_{0},\underline{y})]
           =\sum_{j=0}^{n}\frac{1}{j!}\widetilde{\gamma_{j}}(y_{0},\underline{y})
           \left(\frac{\partial}{\partial y_{0}}\right)^{j}x_{0}^{-1}\delta\left(\frac{y_{0}}{x_{0}}\right)
           \prod_{i=1}^{r}x_{i}^{-1}\delta\left(\frac{y_{i}}{x_{i}}\right).
           \end{equation}
\end{lem}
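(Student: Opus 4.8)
The plan is to prove both assertions by extracting the fields $\gamma_j$ from the bracket (\ref{eq:4.5}) by residues and then controlling how these extractions interact with the projection $\psi_{\widetilde{\mathcal{R}}}$. First I would record an extraction formula: multiplying (\ref{eq:4.5}) by $(x_0-y_0)^m$, rewriting $x_0^{-1}\delta(y_0/x_0)=y_0^{-1}\delta(x_0/y_0)$ so that the $y_0$-derivatives fall on the denominator variable, and applying (\ref{eq:4.3}) and (\ref{eq:4.4}), all terms with $j\neq m$ die under $\mathrm{Res}_{x_0}$ while $\mathrm{Res}_{x_i}$ of $x_i^{-1}\delta(y_i/x_i)$ contributes $1$ for $1\le i\le r$. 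This gives
\begin{equation*}
\gamma_m(y_0,\underline{y})=\mathrm{Res}_{x_0}\mathrm{Res}_{x_1}\cdots\mathrm{Res}_{x_r}\,(x_0-y_0)^m[\alpha(x_0,\underline{x}),\beta(y_0,\underline{y})],
\end{equation*}
which in particular shows the $\gamma_m$ are uniquely determined by (\ref{eq:4.5}).

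To see $\gamma_m\in\overline{\mathcal{E}}(W,r)$, let $p_1(x),\dots,p_r(x)$ be the multiplicity-free polynomials and $p_0(x)$ the polynomial coming from $\beta\in\overline{\mathcal{E}}(W,r)$, so that $p_i(x_i)\beta=0$ for $1\le i\le r$ and $p_0(x_0)\beta\in\mathcal{E}(W,r)$. Since multiplication by $p_i(y_i)$ commutes with $\alpha(x_0,\underline{x})$ and annihilates $\beta(y_0,\underline{y})$, we get $p_i(y_i)[\alpha,\beta]=0$, hence $p_i(y_i)\gamma_m=0$ by the extraction formula. For the $y_0$-condition I would perform the $x_1,\dots,x_r$ residues first: these replace $\alpha$ by its single series $\sum_{n_0}\alpha(n_0,\underline{0})x_0^{-n_0-1}$, so that expanding $(x_0-y_0)^m$ and taking $\mathrm{Res}_{x_0}$ turns $\gamma_m(y_0,\underline{y})w$ into a finite sum of terms $(-y_0)^{m-l}\alpha(l,\underline{0})\beta(y_0,\underline{y})w$ and $(-y_0)^{m-l}\beta(y_0,\underline{y})\alpha(l,\underline{0})w$. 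Multiplying by $p_0(y_0)$ and using that $p_0(y_0)\beta\in\mathcal{E}(W,r)$ is lower-truncated in $y_0$, applied coefficientwise to the finitely many fixed operators and vectors $\alpha(l,\underline{0})$, shows $p_0(y_0)\gamma_m\in\mathcal{E}(W,r)$. Thus $\gamma_m\in\overline{\mathcal{E}}(W,r)$.

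For (\ref{eq:4.6}), fix a nonzero $f(x)$ with $f(x_0)\alpha\in\mathcal{E}(W,r)$ and take $g=p_0$, so that $g(y_0)\beta\in\mathcal{E}(W,r)$ and, by the previous step, $g(y_0)\gamma_j\in\mathcal{E}(W,r)$ for all $j$. Remark \ref{rmk:4.2} then gives $f(x_0)\widetilde{\alpha}=f(x_0)\alpha$, $g(y_0)\widetilde{\beta}=g(y_0)\beta$ and $g(y_0)\widetilde{\gamma_j}=g(y_0)\gamma_j$. Applying these scalar identities coefficientwise inside each product yields
\begin{equation*}
f(x_0)g(y_0)[\widetilde{\alpha}(x_0,\underline{x}),\widetilde{\beta}(y_0,\underline{y})]=f(x_0)g(y_0)[\alpha(x_0,\underline{x}),\beta(y_0,\underline{y})],
\end{equation*}
and substituting (\ref{eq:4.5}), absorbing $g(y_0)$ into each coefficient $\gamma_j$ and using $g(y_0)\gamma_j=g(y_0)\widetilde{\gamma_j}$, identifies the right-hand side with $f(x_0)g(y_0)$ times the right member of (\ref{eq:4.6}).

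The main obstacle is then removing the auxiliary factor $f(x_0)g(y_0)$, since multiplication by a delta function destroys lower truncation and one cannot cancel directly. I would handle this by first proving $[\widetilde{\alpha},\widetilde{\beta}]$ is local: from (\ref{eq:4.5}) and (\ref{eq:4.3}) one has $(x_0-y_0)^{n+1}[\alpha,\beta]=0$, so $f(x_0)g(y_0)(x_0-y_0)^{n+1}[\widetilde{\alpha},\widetilde{\beta}]=0$; because $\widetilde{\alpha},\widetilde{\beta}\in\mathcal{E}(W,r)$ the series $(x_0-y_0)^{n+1}[\widetilde{\alpha},\widetilde{\beta}]$ is, coefficientwise in the remaining variables, lower-truncated in $x_0$ and in $y_0$, so $f(x_0)g(y_0)$ is injective on it and $(x_0-y_0)^{n+1}[\widetilde{\alpha},\widetilde{\beta}]=0$. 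Standard locality then writes $[\widetilde{\alpha},\widetilde{\beta}]$ as a finite sum $\sum_{j}\frac{1}{j!}\phi_j(y_0,\underline{y})(\partial/\partial y_0)^j x_0^{-1}\delta(y_0/x_0)\prod_{i=1}^r x_i^{-1}\delta(y_i/x_i)$ with $\phi_j$ given by the same residue formula, hence $\phi_j\in\widetilde{\mathcal{E}}(W,r)$ and $\phi_j=0$ for $j>n$. Finally, applying $\mathrm{Res}_{x_0}\mathrm{Res}_{x_1}\cdots\mathrm{Res}_{x_r}(x_0-y_0)^k$ to the second displayed identity above and descending from the top value of $k$, using the Leibniz expansion of $f(x_0)(\partial/\partial y_0)^j y_0^{-1}\delta(x_0/y_0)$ together with (\ref{eq:4.4}), gives $f(y_0)g(y_0)(\phi_k-\widetilde{\gamma_k})=0$ for each $k$; as $\phi_k-\widetilde{\gamma_k}$ is lower-truncated in $y_0$ and $f(y_0)g(y_0)\neq0$, this forces $\phi_k=\widetilde{\gamma_k}$, which is exactly (\ref{eq:4.6}).
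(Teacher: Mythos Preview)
Your extraction of the $\gamma_j$ by residues and your passage to tildes by multiplying through by polynomials match the paper's proof. The paper is more economical at the last step: it chooses a single $f_0$ that works for $\alpha$, $\beta$ and all $\gamma_j$ at once, multiplies (\ref{eq:4.5}) by $f_0(x_0)f_0(y_0)$ to obtain the tilded identity, and then simply multiplies through by $l_{x_0;0}(f_0(x_0)^{-1})\,l_{y_0;0}(f_0(y_0)^{-1})$ to recover (\ref{eq:4.6}), without the intermediate locality argument you introduce.

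Your locality route is a legitimate alternative, but one step is not justified as stated. You claim that $(x_0-y_0)^{n+1}[\widetilde{\alpha},\widetilde{\beta}]w$ is, coefficientwise in the remaining variables, lower-truncated in both $x_0$ and $y_0$; this is not true in general. The two products making up the commutator lie (suppressing $\underline{x},\underline{y}$) in $W((x_0))((y_0))$ and $W((y_0))((x_0))$ respectively, and their difference need not be lower-truncated in either variable, so multiplication by $f(x_0)g(y_0)$ is not a priori injective on it. The easy fix is to argue on the two products separately: from
\[
f(x_0)g(y_0)(x_0-y_0)^{n+1}\widetilde{\alpha}(x_0,\underline{x})\widetilde{\beta}(y_0,\underline{y})w
= f(x_0)g(y_0)(x_0-y_0)^{n+1}\widetilde{\beta}(y_0,\underline{y})\widetilde{\alpha}(x_0,\underline{x})w,
\]
apply $l_{x_0;0}(f(x_0)^{-1})\,l_{y_0;0}(g(y_0)^{-1})$, which is a well-defined left inverse to $f(x_0)g(y_0)$ on each of the iterated Laurent spaces $W((x_0))((y_0))$ and $W((y_0))((x_0))$, to obtain $(x_0-y_0)^{n+1}[\widetilde{\alpha},\widetilde{\beta}]=0$. (This is exactly the mechanism behind the paper's one-line cancellation as well.) With this correction the remainder of your argument goes through.
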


\begin{proof} First we note that
       \begin{eqnarray*}
       \qquad\qquad\qquad 0 & = & \mbox{Res}_{x_{0}}\gamma_{j}(y_{0},\underline{y})
       \left(\frac{\partial}{\partial y_{0}}\right)^{n}x_{0}^{-1}\delta\left(\frac{y_{0}}{x_{0}}\right)
            \prod_{i=1}^{r}x_{i}^{-1}\delta\left(\frac{y_{i}}{x_{i}}\right)\\
       & = & (-1)^{n}\mbox{Res}_{x_{0}}\gamma_{j}(y_{0},\underline{y})\left(\frac{\partial}{\partial x_{0}}\right)^{n}x_{0}^{-1}\delta\left(\frac{y_{0}}{x_{0}}\right)
           \prod_{i=1}^{r}x_{i}^{-1}\delta\left(\frac{y_{i}}{x_{i}}\right)  \qquad\qquad\qquad
       \end{eqnarray*}
       for $n\geq 1$.
       Then from (\ref{eq:4.3}), (\ref{eq:4.4}), (\ref{eq:4.5}), we obtain
$$\gamma_{j}(y_{0},\underline{y})\prod_{i=1}^{r}x_{i}^{-1}\delta\left(\frac{y_{i}}{x_{i}}\right)
       =\mbox{Res}_{x_{0}}(x_{0}-y_{0})^{j}[\alpha(x_{0},\underline{x}),\beta(y_{0},\underline{y})]$$
       Thus
       $$\gamma_{j}(y_{0},\underline{y})
       =\mbox{Res}_{x_{r}}\cdots \mbox{Res}_{x_{1}}\mbox{Res}_{x_{0}}(x_{0}-y_{0})^{j}
        [\alpha(x_{0},\underline{x}),\beta(y_{0},\underline{y})] $$
        for $0\leq j\leq n$.
       Since $\beta(x_{0},\underline{x})\in\overline{\mathcal{E}}(W,r)$, we see obviously that
        $\gamma_{j}(x_{0},\underline{x})\in\overline{\mathcal{E}}(W,r)$
       for $j=0,\dots,n$.
 Let $0\neq f_{0}(x_{0})\in\mathbb{C}[x_{0}]$ be such that
       $$f_{0}(x_{0})\alpha(x_{0},\underline{x}), f_{0}(x_{0})\beta(x_{0},\underline{x}),
       f_{0}(x_{0})\gamma_{j}(x_{0},\underline{x})\in\mathcal{E}(W,r)$$
       for $j=0,\dots,n.$ So we have
       $$f_{0}(x_{0})\alpha(x_{0},\underline{x})=f_{0}(x_{0})\widetilde{\alpha}(x_{0},\underline{x}),
        \ \ \ \ f_{0}(x_{0})\beta(x_{0},\underline{x})=f_{0}(x_{0})\widetilde{\beta}(x_{0},\underline{x})$$
         $$f_{0}(x_{0})\gamma_{j}(x_{0},\underline{x})=f_{0}(x_{0})\widetilde{\gamma_{j}}(x_{0},\underline{x})$$
       for $j=0,\dots,n.$

       Multiplying both sides of (\ref{eq:4.5}) by $f_{0}(x_{0})f_{0}(y_{0})$
       we obtain
       $$f_{0}(x_{0})f_{0}(y_{0})
         [\widetilde{\alpha}(x_{0},\underline{x}),\widetilde{\beta}(y_{0},\underline{y})]
         = \sum_{j=0}^{n}\frac{1}{i!}f_{0}(x_{0})f_{0}(y_{0})
            \widetilde{\gamma_{j}}(y_{0},\underline{y})
           (\frac{\partial}{\partial y_{0}})^{j}x_{0}^{-1}\delta(\frac{y_{0}}{x_{0}})
           \prod_{i=1}^{r}x_{i}^{-1}\delta(\frac{y_{i}}{x_{i}}).$$
         Then we multiply both sides by
         $l_{x_{0};0}(f_{0}(x_{0})^{-1})l_{y_{0};0}(f_{0}(y_{0})^{-1})$
         to get (\ref{eq:4.6}).
\end{proof}

\begin{thm}\label{thm4.8} Let $\pi:
           \tau\rightarrow $ End $W$ be a representation of toroidal Lie algebra $\tau$ in
           category $\mathcal{C}_{\tau}$. Define linear maps $\pi_{\widetilde{R}}$
           and $\pi_{\mathcal{E}^{'}}$ from $\tau$ to End($W$) in terms
           of generating functions by
          \begin{eqnarray*}\label{eq:4.8}
           \pi_{\widetilde{\mathcal{R}}}
           \left(a(x_{0},\underline{x})+\alpha_{0}K_{0}(\underline{x})+\sum_{i=1}^{r}\alpha_{i}K_{i}\right)
             =
             \psi_{\widetilde{\mathcal{R}}}(\pi(a(x_{0},\underline{x}))+\alpha_{0}\pi(K_{0}(\underline{x})),
             \end{eqnarray*}
\begin{eqnarray*}\label{eq:4.9}
\pi_{\mathcal{E}^{'}}\left(a(x_{0},\underline{x})+\beta_{0}K_{0}(\underline{x})
+\sum_{i=1}^{r}\beta_{i}K_{i}\right)
             = \psi_{\mathcal{E}^{'}}(\pi(a(x_{0},\underline{x})))
             \end{eqnarray*}
             for $a\in \mathfrak{g}$, $\alpha_{i},\beta_{i}\in\mathbb{C}$, where
             we extend $\pi$ to
             $\tau[[x_{0}^{\pm 1},x_{1}^{\pm 1},\dots,x_{r}^{\pm 1}]]$
             canonically.
             Then
             \begin{eqnarray}\label{eq:4.10}
             \pi = \pi_{\widetilde{\mathcal{R}}}+ \pi_{\mathcal{E}^{'}}
             \end{eqnarray}
and the linear map $\varphi(u,v) = \pi_{\widetilde{\mathcal{R}}}(u)
                                         + \pi_{\mathcal{E}^{'}}(v)$ defines a representation
             of $\tau\oplus\tau$ on $W$. If $(W,\pi)$ is irreducible, we have that $W$ is an irreducible
$\tau\oplus\tau$-module. Furthermore,
$(W,\pi_{\widetilde{\mathcal{R}}})$ is a module in category
             $\widetilde{\mathcal{R}}$ and $(W,\pi_{{\mathcal{E}}^{'}})$
             is a module in category $\mathcal{E}_{\tau}^{'}$.
             At last, if $(W,\pi)$ is integrable, then $(W,\pi_{\widetilde{\mathcal{R}}})$
             is integrable in $\widetilde{\mathcal{R}}$ and $(W,\pi_{\mathcal{E}^{'}})$
             is integrable in $\mathcal{E}_{\tau}^{'}$.
\end{thm}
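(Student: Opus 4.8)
The plan is to read the additive splitting off Proposition \ref{prop:4.8}, verify that the two pieces are commuting $\tau$-representations, and then transport irreducibility and integrability along the diagonal embedding $\tau\hookrightarrow\tau\oplus\tau$. First I would establish \eqref{eq:4.10}: since $W\in\mathcal{C}_{\tau}$, each $\pi(a(x_{0},\underline{x}))$ lies in $\overline{\mathcal{E}}(W,r)$, so Proposition \ref{prop:4.8} gives $\pi(a(x_{0},\underline{x}))=\widetilde{\pi(a)}(x_{0},\underline{x})+\check{\pi(a)}(x_{0},\underline{x})$, which is exactly $\pi_{\widetilde{\mathcal{R}}}(a(x_{0},\underline{x}))+\pi_{\mathcal{E}^{'}}(a(x_{0},\underline{x}))$. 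For the center, $\pi(K_{0}(\underline{x}))\in(\mathrm{End}\,W)[[x_{1}^{\pm 1},\dots,x_{r}^{\pm 1}]]$ and, by the defining condition of $\mathcal{C}_{\tau}$, is annihilated by multiplicity-free $p_{i}(x_{i})$ for $1\le i\le r$, so $\pi(K_{0}(\underline{x}))\in\widetilde{\mathcal{E}}(W,r)$; hence $\psi_{\widetilde{\mathcal{R}}}$ fixes it while $\psi_{\mathcal{E}^{'}}$ kills it, matching the two definitions, and the remaining $K_{i}$ act as $0$. To see that $\pi_{\widetilde{\mathcal{R}}}$ is a representation in $\widetilde{\mathcal{R}}$, I would read \eqref{eq:2.3} through $\pi$: using $K_{i}=0$ and $y_{i}^{-1}\delta(x_{i}/y_{i})=x_{i}^{-1}\delta(y_{i}/x_{i})$, it takes the shape \eqref{eq:4.5} with $n=1$, $\gamma_{0}=\pi([a,b](y_{0},\underline{y}))$ and $\gamma_{1}=\langle a,b\rangle\,\pi(K_{0}(\underline{y}))$. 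Lemma \ref{lem:4.11} then yields \eqref{eq:4.6}, the same relation for the tilde-maps, with $\widetilde{\gamma_{1}}=\langle a,b\rangle\,\pi(K_{0}(\underline{y}))$; membership in $\widetilde{\mathcal{R}}$ is built into the codomain of $\psi_{\widetilde{\mathcal{R}}}$.

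The heart of the proof, and the step I expect to be the main obstacle, is the vanishing of the cross terms. Writing $\widetilde{\alpha}=\widetilde{\pi(a)}$, $\check{\alpha}=\check{\pi(a)}$ (similarly for $b$) and subtracting \eqref{eq:4.6} from \eqref{eq:4.5} gives $[\widetilde{\alpha},\check{\beta}]+[\check{\alpha},\widetilde{\beta}]+[\check{\alpha},\check{\beta}]=\sum_{j}\tfrac{1}{j!}\check{\gamma_{j}}(y_{0},\underline{y})\,\partial_{y_{0}}^{j}x_{0}^{-1}\delta(y_{0}/x_{0})\prod_{i}x_{i}^{-1}\delta(y_{i}/x_{i})$; call this $(\dagger)$. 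To isolate the cross terms I would pick a single polynomial $q_{0}(y_{0})$ with $q_{0}(0)\ne 0$ annihilating $\check{\beta}$ and all $\check{\gamma_{j}}$ at once (possible since each lies in $\overline{\mathcal{E}_{0}}(W,r)$). Multiplying $(\dagger)$ by $q_{0}(y_{0})$ and using $q_{0}\check{\beta}=0$, $p_{0}\check{\alpha}=0$ and $q_{0}\check{\gamma_{j}}=0$ collapses it to $q_{0}(y_{0})[\check{\alpha},\widetilde{\beta}]=0$. Expanding $\check{\alpha}$ as a finite sum $\sum_{j,l}\bigl(\partial_{x_{0}}^{l}x_{0}^{-1}\delta(c_{j}/x_{0})\bigr)a_{j,l}(\underline{x})$ supported at the nonzero roots $c_{j}$ of $p_{0}$, and extracting coefficients by the Vandermonde/residue argument of Proposition \ref{pro:3.4} (the delta-derivatives are independent), I get $q_{0}(y_{0})[a_{j,l}(\underline{x}),\widetilde{\beta}(y_{0},\underline{y})]=0$ for each $j,l$. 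Since $[a_{j,l},\widetilde{\beta}]$ is lower-truncated in $y_{0}$ (as $\widetilde{\beta}\in\mathcal{E}(W,r)$) and $q_{0}$ is a unit in $\mathbb{C}((y_{0}))$, this forces $[a_{j,l},\widetilde{\beta}]=0$, hence $[\check{\alpha},\widetilde{\beta}]=0$; the symmetric form of \eqref{eq:2.3} (derivatives in $x_{0}$) gives $[\widetilde{\alpha},\check{\beta}]=0$ the same way.

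With both cross terms gone, the images of $\pi_{\widetilde{\mathcal{R}}}$ and $\pi_{\mathcal{E}^{'}}$ commute, and $(\dagger)$ reduces to $[\check{\alpha},\check{\beta}]=\sum_{j}\tfrac{1}{j!}\check{\gamma_{j}}\,\partial_{y_{0}}^{j}x_{0}^{-1}\delta(y_{0}/x_{0})\prod_{i}x_{i}^{-1}\delta(y_{i}/x_{i})$, where $\check{\gamma_{1}}=\langle a,b\rangle\,\psi_{\mathcal{E}^{'}}(\pi(K_{0}(\underline{y})))=0$ because $\pi(K_{0}(\underline{y}))\in\widetilde{\mathcal{E}}(W,r)$. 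Thus $\pi_{\mathcal{E}^{'}}$ satisfies the centreless $\tau$-relation, so it is a representation, and $(W,\pi_{\mathcal{E}^{'}})\in\mathcal{E}_{\tau}^{'}$ by construction. Combining this with the fact that $\pi_{\widetilde{\mathcal{R}}}$ is a representation and the two images commute, the map $\varphi(u,v)=\pi_{\widetilde{\mathcal{R}}}(u)+\pi_{\mathcal{E}^{'}}(v)$ respects the bracket of $\tau\oplus\tau$, i.e. is a representation.

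Finally, for irreducibility I would note $\varphi(u,u)=\pi_{\widetilde{\mathcal{R}}}(u)+\pi_{\mathcal{E}^{'}}(u)=\pi(u)$, so the diagonal copy of $\tau$ acts through $\pi$; every $\tau\oplus\tau$-submodule is then a $\pi(\tau)$-submodule, and $\pi$-irreducibility forces $\tau\oplus\tau$-irreducibility. For integrability, Lemma \ref{lem:4.10} expresses $\pi_{\widetilde{\mathcal{R}}}(x_{\alpha}(m_{0},\underline{m}))$ and $\pi_{\mathcal{E}^{'}}(x_{\alpha}(m_{0},\underline{m}))$ as finite linear combinations of the operators $\pi(x_{\alpha}(m_{0}+k,\underline{m}))$, which pairwise commute (taking $x_{\alpha}$ in the isotropic basis of Lemma \ref{lem:2.3}) and act locally nilpotently under $\pi$; a finite sum of commuting locally nilpotent operators is locally nilpotent, giving condition (ii). Condition (i) then follows from local $\mathfrak{sl}_{2}$-theory: each triple $\langle\pi_{\widetilde{\mathcal{R}}}(e_{\alpha}(0,\underline{0})),\pi_{\widetilde{\mathcal{R}}}(f_{\alpha}(0,\underline{0}))\rangle$ acts locally finitely, so the coroots act semisimply with integral eigenvalues, and the central $\pi(K_{0}(\underline{n}))$ stay semisimple and commute with everything, yielding the joint weight decomposition; the identical argument handles $\pi_{\mathcal{E}^{'}}$.
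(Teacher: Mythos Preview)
Your overall plan matches the paper's: deduce \eqref{eq:4.10} from Proposition~\ref{prop:4.8}, feed \eqref{eq:2.3} into Lemma~\ref{lem:4.11} to see that $\pi_{\widetilde{\mathcal{R}}}$ is a representation in $\widetilde{\mathcal{R}}$, kill the cross terms, and read off the rest. Two points deserve comment.

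For the cross terms the paper takes a shorter path than your $(\dagger)$--argument. Rather than multiplying by a polynomial in $y_{0}$ and then expanding $\check{\alpha}$ in delta-functions, the paper multiplies $[\pi_{\widetilde{\mathcal{R}}}(a(x_{0},\underline{x})),\pi_{\mathcal{E}^{'}}(b(y_{0},\underline{y}))]$ by $f_{0}(x_{0})$ and uses Remark~\ref{rmk:4.2} to replace $f_{0}(x_{0})\pi_{\widetilde{\mathcal{R}}}(a(x_{0},\underline{x}))$ by $f_{0}(x_{0})\pi(a(x_{0},\underline{x}))$. One is then comparing $f_{0}(x_{0})$ times \eqref{eq:2.3} with $f_{0}(x_{0})$ times \eqref{eq:4.6}; after the delta-function substitution $f_{0}(x_{0})\mapsto f_{0}(y_{0})$ on the $[a,b]$-term and using $f_{0}(y_{0})\pi([a,b](y_{0},\underline{y}))=f_{0}(y_{0})\pi_{\widetilde{\mathcal{R}}}([a,b](y_{0},\underline{y}))$, the two sides visibly cancel, and one finishes by multiplying by $l_{x_{0};0}(f_{0}(x_{0})^{-1})$ (legal because $\pi_{\widetilde{\mathcal{R}}}(a(x_{0},\underline{x}))\in\mathcal{E}(W,r)$). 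Your argument also works, but the Vandermonde/delta-expansion is unnecessary: once you have $q_{0}(y_{0})[\check{\alpha},\widetilde{\beta}]w=0$, note that $\check{\alpha}(x_{0},\underline{x})w$ already involves only finitely many vectors of $W$ (being annihilated by all $p_{i}(x_{i})$), so $[\check{\alpha},\widetilde{\beta}]w$ is lower-truncated in $y_{0}$ and $q_{0}(y_{0})$ can be cancelled directly.

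Your integrability argument for condition~(ii) has a gap. Lemma~\ref{lem:4.10} only says $\widetilde{a}(n_{0},\underline{n})w=\sum_{i=0}^{l}\beta_{i}\,a(n_{0}+i,\underline{n})w$ with $l,\beta_{i}$ depending on $w$; it is \emph{not} an identity of operators, so you cannot invoke ``a finite sum of commuting locally nilpotent operators is locally nilpotent'' for $\widetilde{a}(n_{0},\underline{n})$ as written. The paper closes this by first proving, straight from the definition of $\psi_{\widetilde{\mathcal{R}}}$, that $a(m_{0},\underline{m})\widetilde{a}(n_{0},\underline{n})=\widetilde{a}(n_{0},\underline{n})a(m_{0},\underline{m})$ on $W$; with this in hand one can iterate the pointwise identity of Lemma~\ref{lem:4.10} to obtain $\widetilde{a}(n_{0},\underline{n})^{p}w=\bigl(\sum_{i}\beta_{i}a(n_{0}+i,\underline{n})\bigr)^{p}w$ for all $p\ge 0$, and local nilpotency of $\widetilde{a}$ and then of $\check{a}=a-\widetilde{a}$ follows. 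You can recover the needed commutation from what you have already shown (your cross-term vanishing with $a=b$ gives $[\check{a},\widetilde{a}]=0$, and $\pi_{\widetilde{\mathcal{R}}}$ being a representation gives $[\widetilde{a},\widetilde{a}]=0$, hence $[a,\widetilde{a}]=0$), but this step must be made explicit.
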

\begin{proof} Firstly, the relation (\ref{eq:4.10}) follows from Proposition
\ref{prop:4.8}. By the commutator relation (\ref{eq:2.3}) and Lemma
\ref{lem:4.11}, it is straightforward to check that
            $(W,\pi_{\widetilde{\mathcal{R}}})$ is a $\tau$-module and it is a restricted
            $\tau$-module in category $\widetilde{R}$.

            Let $f_{i}(x)$ for $i=0,1,\dots,r$ be nonzero polynomials satisfying that the nonzero roots of $f_{i}(x)$  for $1\le i\le r$ are
            multiplicity-free,  such that
            $f_{0}(x_{0})\pi(a(x_{0},\underline{x}))\in\mathcal{E}(W,r)$,
            $f_{i}(x_{i})\pi(a(x_{0},\underline{x}))=0$,
            $f_{i}(x_{i})\pi(K_{0}(\underline{x})) =0$
            for all $1\le i\le r$, $a\in \mathfrak{g}$.
            Then
            $$f_{0}(x_{0})\psi_{\widetilde{\mathcal{R}}}(\pi(a(x_{0},\underline{x})))
               = f_{0}(x_{0})\pi(a(x_{0},\underline{x})),\ \ \
              f_{i}(x_{i})\psi_{\widetilde{\mathcal{R}}}(\pi(a(x_{0},\underline{x})))=0$$
            for $1\le i\le r$, and
              $$f_{i}(x_{i})\psi_{\mathcal{E}}(\pi(a(x_{0},\underline{x})))=0$$
            for $0\le i\le r$. Thus we have
           \begin{eqnarray}\label{eq:4.31}
           f_{0}(x_{0})\pi_{\widetilde{\mathcal{R}}}((a(x_{0},\underline{x})))
            =f_{0}(x_{0})\psi_{\widetilde{\mathcal{R}}}(\pi(a(x_{0},\underline{x})))
            = f_{0}(x_{0})\pi(a(x_{0},\underline{x})),
            \end{eqnarray}
\begin{eqnarray}
             f_{i}(x_{i})\pi_{\widetilde{\mathcal{R}}}(a(x_{0},\underline{x})) =0
             \end{eqnarray}
            for $1\le i\le r$, and
            \begin{eqnarray}\label{eq:4.33}
            f_{i}(x_{i})\pi_{\mathcal{E}}(a(x_{0},\underline{x}))=0.
             \end{eqnarray}
for $0\le i\le r$.

            For $a,b\in \mathfrak{g}$, $w\in W$, by using the commutator relation (\ref{eq:2.3})
            and delta-function substitution property we have
 \begin{align*}
            &f_{0}(x_{0})[\pi_{\widetilde{\mathcal{R}}}(a(x_{0},\underline{x})), \pi_{\mathcal{E}}(b(y_{0},\underline{y}))]w \\
             =& f_{0}(x_{0})
             [\pi_{\widetilde{\mathcal{R}}}(a(x_{0},\underline{x})), \pi(b(y_{0},\underline{y}))]w
              -f_{0}(x_{0})
              [\pi_{\widetilde{\mathcal{R}}}(a(x_{0},\underline{x})), \pi_{\widetilde{\mathcal{R}}}(b(y_{0},\underline{y}))]w\\
             =& f_{0}(x_{0})
             [\pi(a(x_{0},\underline{x})), \pi(b(y_{0},\underline{y}))]w
              -f_{0}(x_{0})
              [\pi_{\widetilde{\mathcal{R}}}(a(x_{0},\underline{x})), \pi_{\widetilde{\mathcal{R}}}(b(y_{0},\underline{y}))]w\\
             =& f_{0}(x_{0})\pi([a,b](y_{0},\underline{y}))\prod_{i=0}^{r}x_{i}^{-1}\delta(\frac{y_{i}}{x_{i}})w
            + f_{0}(x_{0})\pi(K_{0}(\underline{y}))(\frac{\partial}{\partial y_{0}}y_{0}^{-1}\delta(\frac{x_{0}}{y_{0}}))
                 \prod_{i=1}^{r}x_{i}^{-1}\delta(\frac{y_{i}}{x_{i}})w   \\
            -& f_{0}(x_{0})\pi_{\widetilde{\mathcal{R}}}([a,b](y_{0},\underline{y}))
                  \prod_{i=0}^{r}x_{i}^{-1}\delta(\frac{y_{i}}{x_{i}})w
              - f_{0}(x_{0})\pi_{\widetilde{\mathcal{R}}}(K_{0}(\underline{y}))(\frac{\partial}{\partial y_{0}}y_{0}^{-1}\delta(\frac{x_{0}}{y_{0}}))\prod_{i=1}^{r}x_{i}^{-1}\delta(\frac{y_{i}}{x_{i}})w\\
             =& f_{0}(y_{0})\pi([a,b](y_{0},\underline{y}))\prod_{i=0}^{r}x_{i}^{-1}\delta(\frac{y_{i}}{x_{i}})w
             + f_{0}(x_{0})\pi(K_{0}(\underline{y}))(\frac{\partial}{\partial y_{0}}y_{0}^{-1}\delta(\frac{x_{0}}{y_{0}}))
                 \prod_{i=1}^{r}x_{i}^{-1}\delta(\frac{y_{i}}{x_{i}})w   \\
             -& f_{0}(y_{0})\pi_{\widetilde{\mathcal{R}}}([a,b](y_{0},\underline{y}))
                  \prod_{i=0}^{r}x_{i}^{-1}\delta(\frac{y_{i}}{x_{i}})w
             - f_{0}(x_{0})\pi_{\widetilde{\mathcal{R}}}(K_{0}(\underline{y}))(\frac{\partial}{\partial y_{0}}y_{0}^{-1}\delta(\frac{x_{0}}{y_{0}}))\prod_{i=1}^{r}x_{i}^{-1}\delta(\frac{y_{i}}{x_{i}})w\\
             =& 0,
            \end{align*}
            where we have used identity
           $f_{0}(y_{0})\pi([a,b](y_{0},\underline{y}))=
           f_{0}(y_{0})\pi_{\widetilde{\mathcal{R}}}([a,b](y_{0},\underline{y}))$
           from (\ref{eq:4.31}).

           Since $\pi_{\widetilde{\mathcal{R}}}(a(x_{0},\underline{x}))\in\mathcal{E}(W,r)$,
           we can multiply both sides by $l_{x_{0};0}\frac{1}{f_{0}(x_{0})}$
           to get
           \begin{eqnarray}\label{eq:4.34}
           [\pi_{\widetilde{\mathcal{R}}}(a(x_{0},\underline{x})), \pi_{\mathcal{E}}(b(y_{0},\underline{y}))]w=0.
           \end{eqnarray}
          Since $\pi_{\mathcal{E}}=\pi-\pi_{\widetilde{\mathcal{R}}}$,
          we have
          \begin{eqnarray*}
          [\pi_{\mathcal{E}}(a(x_{0},\underline{x})), \pi_{\mathcal{E}}(b(y_{0},\underline{y}))]w
        = [\pi(a(x_{0},\underline{x})), \pi(b(y_{0},\underline{y}))]w
          -[\pi_{\widetilde{\mathcal{R}}}(a(x_{0},\underline{x})),
          \pi_{\widetilde{\mathcal{R}}}(b(y_{0},\underline{y}))]w.
\end{eqnarray*}
          This together with the identity (\ref{eq:4.33}) shows that $(W,\pi_{\mathcal{E}})$ is
          a $\tau$-module in category $\mathcal{E_{\tau}}^{'}.$
          From (\ref{eq:4.34}), we see that $\varphi(u,v) = \pi_{\widetilde{\mathcal{R}}}(u)+
          \pi_{\mathcal{E}}(v)$ defines a representation of
          $\tau\oplus\tau$ on $W$. Since $\pi = \pi_{\widetilde{\mathcal{R}}}+
          \pi_{\mathcal{E}^{'}}$, it is easy to see that if $W$ is an irreducible $\tau$-module,
          then $W$ is an irreducible $\tau\oplus\tau$-module.

 Finally, we prove that $(W,\pi_{\widetilde{\mathcal{R}}})$ and
          $(W,\pi_{\mathcal{E}^{'}})$ are integrable when $(W,\pi)$ is integrable,
          i.e., we need to prove that for $a\in \mathfrak{g}_{\alpha}$ with $\alpha\in\Delta$
          and $n_{0}\in\mathbb{Z},\underline{n}\in\mathbb{Z}^{r}$,
          $\widetilde{a}(n_{0},\underline{n})$ and
          $\check{a}(n_{0},\underline{n})$ act locally nilpotently on $W$,
          where $\pi_{\widetilde{\mathcal{R}}}(a(n_{0},\underline{n}))=
                \widetilde{a}(n_{0},\underline{n})$
          and $\pi_{\mathcal{E}^{'}}(a(n_{0},\underline{n}))=
              \check{a}(n_{0},\underline{n})$.

          Since $[a,a]=0$ and $\langle a, a\rangle=0$, we have
          $[a(n_{0},\underline{n}),a(m_{0},\underline{m})]=0$
          for all $n_{0},m_{0}\in\mathbb{Z}$, $\underline{n},\underline{m}\in\mathbb{Z}^{r}$.
For $w\in W$, we have
          \begin{eqnarray*}
          &&{}a(n_{0},\underline{n})\widetilde{a}(x_{0},\underline{x})w\\
          &=&  a(n_{0},\underline{n})l_{x_{0};0}(1/f_{0}(x_{0}))(f_{0}(x_{0})a(x_{0},\underline{x})w)\\
          &=& l_{x_{0};0}(1/f_{0}(x_{0}))(f_{0}(x_{0})a(x_{0},\underline{x})a(n_{0},\underline{n})w)\\
          &=&
          \widetilde{a}(x_{0},\underline{x})a(n_{0},\underline{n})w.
          \end{eqnarray*}
          Thus
          $a(n_{0},\underline{n})\widetilde{a}(m_{0},\underline{m})
          =\widetilde{a}(m_{0},\underline{m})a(n_{0},\underline{n})$
          for all $n_{0},m_{0}\in\mathbb{Z}$, $\underline{n},\underline{m}\in\mathbb{Z}^{r}$.
Fix a vector $w\in W$. By Lemma \ref{lem:4.10},
          we know that
          $\widetilde{a}(n_{0},\underline{n})w
        =\sum_{i=0}^{l}\beta_{i}a(n_{0}+i,\underline{n})w $
        for some $l\in\mathbb{N},\beta_{1},\dots,\beta_{l}\in\mathbb{C}$.
Thus we get
        \begin{eqnarray}\label{eq:4.38}\widetilde{a}(n_{0},\underline{n})^{p}w=
        (\beta_{0}a(n_{0},\underline{n})+\cdots+\beta_{l}a(n_{0}+l,\underline{n}))^{p}w
        \end{eqnarray}
        for any $p\geq 0$. Since $(W,\pi)$ is an integrable $\tau$-module, there exists a positive
integer $k$
        such that
        $$a(m_{0},\underline{n})^{k}w = 0$$
        for $m_{0}=n_{0},n_{0}+1,\dots,n_{0}+l$.
        This together with \ref{eq:4.38} gives
        $\widetilde{a}(n_{0},\underline{n})^{k(l+1)}w=0$.
Since $\check{a}(n_{0},\underline{n}) =
         a(n_{0},\underline{n})-\widetilde{a}(n_{0},\underline{n})$
         and
         $[a(n_{0},\underline{n}),\widetilde{a}(n_{0},\underline{n})]=0$,
         we have
         \begin{eqnarray*}
         \check{a}(n_{0},\underline{n})^{k(l+2)}w &=&
         (a(n_{0},\underline{n})-\widetilde{a}(n_{0},\underline{n}))^{k(l+2)}w\\
         &=& \sum_{i\geq 0}\binom{k(l+2)}{i}(-1)^{i}
              a(n_{0},\underline{n})^{(k(l+2)-i)}\widetilde{a}(n_{0},\underline{n})^{i}w
              =0.
         \end{eqnarray*}
         So we have proved the integrability. This completes the
         proof.
\end{proof}

We now consider irreducible integrable modules in category
$\widetilde{\mathcal{R}}$. Let $W$ be an irreducible integrable
$\tau$-module in category $\widetilde{\mathcal{R}}$. Then there
exist nonzero polynomials $p_{i}(x)$ for
 $1\le i\le r$ satisfying that the nonzero
roots of $p_{i}(x)$ are multiplicity-free,
$p_{i}(x_{i})a(x_{0},\underline{x})w = 0$, and
$p_{i}(x_{i})K_{0}(\underline{x})w =0$.  Let $J=\langle p_{1}(t_{1}),\dots, p_{r}(t_{r})\rangle$ be the ideal generated by
$p_{i}(t_{i}),i=1,\dots,r.$ By the same argument as with
category $\mathcal{E}_{\tau}$, we see that a $\tau$-module $W$ in
category $\widetilde{\mathcal{R}}$ is a module for the Lie
algebra $(\mathfrak{g}\otimes \mathbb{C}[t_{0}^{\pm 1}]\oplus
\mathbb{C}K_{0})\otimes\mathbb{C}[t_{1}^{\pm 1},\dots,t_{r}^{\pm
1}]/J.$ We denote this quotient algebra by $\bar{\tau}$.

For each $1\leq i\leq r$, let $\{z_{ij}\in\mathbb{C}^{*},1\leq j\leq
M_{i}\}$ be the set of distinct nonzero roots of $p_{i}(x_{i})$,
where the positive integer $M_{i}$ is the number of distinct nonzero
roots of $p_{i}(x_{i})$. Set $M=M_{1}\cdots M_{r}$. Let $I_{i}=$
$(i_{1},\dots,i_{r})$, where $1\leq i_{j}\leq M_{j}$, $1\leq j\leq
r.$ Set $\underline{z}_{I_{i}}=(z_{1,i_{1}},\dots,z_{r,i_{r}})$
and
$\underline{z}_{I_{i}}^{\underline{n}}=z_{1,i_{1}}^{n_{1}}\cdots
z_{r,i_{r}}^{n_{r}}$ for $\underline{n} =
(n_{1},\dots,n_{r})\in\mathbb{Z}^{r}$. Let $\phi$ be the Lie
algebra homomorphism
$$ \phi:(\mathfrak{g}\otimes \mathbb{C}[t_{0}^{\pm 1}]\oplus
\mathbb{C}K_{0})\otimes\mathbb{C}[t_{1}^{\pm 1},\dots,t_{r}^{\pm 1}]
\rightarrow  \hat{\mathfrak{g}}_{M}:=({\mathfrak{g}\otimes
\mathbb{C}[t_{0}^{\pm 1}]\oplus \mathbb{C}K_{0}}){_{M}}
$$ defined by
$$ \phi(X\otimes t^{\underline{n}})=(\underline{z}_{I_{i}}^{\underline{n}}X)     $$
where $\hat{\mathfrak{g}}_{M}=
\oplus_{M-\mbox{copies}}(\mathfrak{g}\otimes
\mathbb{C}[t_{0}^{\pm}]\oplus \mathbb{C}K_{0})$,
$X\in\hat{\mathfrak{g}}={\mathfrak{g}\otimes \mathbb{C}[t_{0}^{\pm
1}]\oplus \mathbb{C}K_{0}}$. From the proof of Lemma 3.11 in
\cite{R}, we see that $\phi$ is surjective and Ker$\phi=J$, so that $\bar{\tau}\cong\hat{\mathfrak{g}}_{M}$. Consequently, $W$ is a module for
Lie algebra $\hat{\mathfrak{g}}_{M}$. Since $\tau$-module $W$ is
in category $\widetilde{\mathcal{R}}$, we also have
 $a(x_{0},\underline{x})\in\mathcal{E}(W,r)$ for $a\in \mathfrak{g}$,
i.e., $a(n_{0},\underline{n})w = 0 $ for $n_{0}$ sufficiently large.
Thus $W$ is an irreducible restricted
integrable $\hat{\mathfrak{g}}_{M}$-module. From \cite{DLM} (also see
the Theorem 2.4 of \cite{L1}), an irreducible restricted integrable
$\hat{\mathfrak{g}}$-module is an irreducible highest integrable
module.  Then by the above discussion and by Lemmas
2.6 and 2.7 in \cite{L1}, $W$ is isomorphic to a tensor product
module $V_{1}\otimes\cdots\otimes V_{M}$, where
$V_{1},\dots, V_{M}$ are irreducible highest integrable
$\hat{\mathfrak{g}}$-modules,
 with the action given by
$$ a(n_{0},\underline{n})(v_{1}\otimes\cdots\otimes v_{M})
 = \sum_{i=1}^{M}\underline{z}_{I_{i}}^{\underline{n}}
    (v_{1}\otimes\cdots\otimes a(n_{0})v_{i}\otimes\cdots\otimes v_{M})$$
    and
    $$K_{0}(\underline{n})(v_{1}\otimes\cdots\otimes v_{M})
    =\sum_{i=1}^{M}\underline{z}_{I_{i}}^{\underline{n}}
    (v_{1}\otimes\cdots\otimes K_{0}v_{i}\otimes\cdots\otimes v_{M}),$$
where $\underline{z}_{I_{i}}^{\underline{n}}$ is defined as above,
  $v_{i}\in V_{i}$ for $1\leq i\leq M$, and $a(n_{0})=a\otimes
  t_{0}^{n_{0}}.$
  We denote this module by $V_{1}(\underline{z}_{I_{1}})\otimes\cdots
 \otimes V_{M}(\underline{z}_{I_{M}}).$

\begin{thm}Let $\tau$ be the toroidal Lie algebra.
            Then every irreducible integrable $\tau$-module in category
            $\mathcal{C}_{\tau}$ is isomorphic to a module of the form
            $$V_{1}(\underline{z}_{I_{1}})\otimes\cdots\otimes V_{M}(\underline{z}_{I_{M}})
            \otimes U_{1}(\underline{z}_{1})\otimes\cdots\otimes
U_{s}(\underline{z}_{s}),$$ where each $V_{i}$ is an irreducible
             highest weight integrable
             $\hat{\mathfrak{g}}$-module,
              and each $U_{j}$ is a finite-dimensional
             irreducible $\mathfrak{g}$-module, $\underline{z}_{I_{j}}$  for $ j = 1,\dots,M$
             are defined above, and $\underline{z}_{i} =
(z_{0i},z_{1i},\dots,z_{ri})\in(\mathbb{C}^{*})^{r+1}$  for $1\le
i\le s$ are distinct $(r+1)$-tuples.
            \end{thm}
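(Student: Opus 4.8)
The plan is to feed an irreducible integrable $W$ in $\mathcal{C}_{\tau}$ into Theorem \ref{thm4.8} and then reduce the classification to the two cases already settled. First I would invoke Theorem \ref{thm4.8}: it produces the splitting $\pi = \pi_{\widetilde{\mathcal{R}}} + \pi_{\mathcal{E}^{'}}$, exhibits $W$ as an irreducible module for $\tau\oplus\tau$ through $\varphi(u,v)=\pi_{\widetilde{\mathcal{R}}}(u)+\pi_{\mathcal{E}^{'}}(v)$, and guarantees that $(W,\pi_{\widetilde{\mathcal{R}}})$ is an integrable module in $\widetilde{\mathcal{R}}$ while $(W,\pi_{\mathcal{E}^{'}})$ is an integrable module in $\mathcal{E}_{\tau}^{'}$. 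Thus the two commuting copies of $\tau$ act through an $\widetilde{\mathcal{R}}$-structure and an $\mathcal{E}_{\tau}^{'}$-structure respectively, and it remains to factor the irreducible $\tau\oplus\tau$-module $W$ as a tensor product compatible with these two structures.

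Next I would establish the tensor product factorization. Using the surjection $\phi$ and the isomorphism $\bar{\tau}\cong\hat{\mathfrak{g}}_{M}$ recalled above, the first structure $(W,\pi_{\widetilde{\mathcal{R}}})$ becomes a restricted integrable $\hat{\mathfrak{g}}_{M}$-module; by the complete reducibility result of \cite{DLM} (see also Theorem 2.4 of \cite{L1}) it is a direct sum of irreducible highest weight integrable modules. In particular $W$ contains an irreducible $\widetilde{\mathcal{R}}$-submodule $W^{(1)}$. Since the second copy of $\tau$ commutes with the first, the $W^{(1)}$-isotypic component of $W$ is stable under $\tau\oplus\tau$, so by irreducibility it is all of $W$; the standard multiplicity-space argument (Lemmas 2.6 and 2.7 of \cite{L1}) then yields an isomorphism of $\tau\oplus\tau$-modules $W\cong W^{(1)}\otimes W^{(2)}$, where $W^{(2)}=\mathrm{Hom}_{\widetilde{\mathcal{R}}}(W^{(1)},W)$ carries the residual action of the second copy of $\tau$ and is irreducible for it. Because $\pi_{\widetilde{\mathcal{R}}}$ acts only on the first tensor slot and $\pi_{\mathcal{E}^{'}}$ only on the second, $W^{(1)}$ is an irreducible integrable module in $\widetilde{\mathcal{R}}$ and $W^{(2)}$ is an irreducible integrable module in $\mathcal{E}_{\tau}^{'}$; the defining polynomial-annihilation conditions and the local nilpotency descend from $W$ to each factor.

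Finally I would apply the two classifications. To $W^{(1)}$ I apply the classification of irreducible integrable modules in $\widetilde{\mathcal{R}}$ established in the discussion preceding this theorem, giving $W^{(1)}\cong V_{1}(\underline{z}_{I_{1}})\otimes\cdots\otimes V_{M}(\underline{z}_{I_{M}})$ with each $V_{i}$ an irreducible highest weight integrable $\hat{\mathfrak{g}}$-module. To $W^{(2)}$ I apply Remark \ref{rmk:4.1} (equivalently Proposition \ref{pro:3.5}), giving $W^{(2)}\cong U_{1}(\underline{z}_{1})\otimes\cdots\otimes U_{s}(\underline{z}_{s})$ with the $U_{j}$ finite-dimensional irreducible $\mathfrak{g}$-modules and the $\underline{z}_{i}$ distinct. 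Recombining the original action $\pi=\pi_{\widetilde{\mathcal{R}}}+\pi_{\mathcal{E}^{'}}$ on $W\cong W^{(1)}\otimes W^{(2)}$ recovers exactly the tensor product $\tau$-module of the asserted form. The main obstacle is the second paragraph: one must be sure that complete reducibility genuinely holds for the $\widetilde{\mathcal{R}}$-structure, which is precisely why the reduction through $\phi$ to $\hat{\mathfrak{g}}_{M}$ and the appeal to \cite{DLM} are essential (the naive complete reducibility for restricted integrable $\tau$-modules being unavailable, as noted in the introduction), together with the verification that the multiplicity space $W^{(2)}$ inherits both membership in $\mathcal{E}_{\tau}^{'}$ and integrability.
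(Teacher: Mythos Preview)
Your proposal is correct and follows essentially the same route as the paper: invoke Theorem \ref{thm4.8} to obtain the commuting pair $(\pi_{\widetilde{\mathcal{R}}},\pi_{\mathcal{E}^{'}})$ and the irreducible $\tau\oplus\tau$-structure, pass the $\widetilde{\mathcal{R}}$-side through $\phi$ to $\hat{\mathfrak{g}}_{M}$, appeal to \cite{DLM} for complete reducibility, use Lemma 2.7 of \cite{L1} to factor $W$ as a tensor product, and then apply the two classifications (the discussion preceding the theorem and Remark \ref{rmk:4.1}). The only cosmetic difference is that the paper phrases the \cite{DLM} step by first restricting from $\hat{\mathfrak{g}}_{M}$ down to a single copy of $\hat{\mathfrak{g}}$ before invoking complete reducibility, whereas you state it directly at the level of $\hat{\mathfrak{g}}_{M}$; either formulation suffices.
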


\begin{proof}Let $(W,\pi)$ be an irreducible integrable representation of $\tau$
            in category $\mathcal{C}_{\tau}$. By Theorem \ref{thm4.8}, $W$ is an irreducible
            $\tau\oplus\tau$-module with the action
            $\pi(u,v) = \pi_{\widetilde{\mathcal{R}}}(u)+ \pi_{\mathcal{E}^{'}}(v)$
            for $u,v\in\tau$.
           Moreover,
           $(W,\pi_{\widetilde{\mathcal{R}}})$
             is integrable in category $\widetilde{\mathcal{R}}$ and $(W,\pi_{\mathcal{E}^{'}})$
             is integrable in category $\mathcal{E}^{'}_{\tau}$. We
             view $W$ as a $\tau$-module in
             $\widetilde{\mathcal{R}}$, then $W$ is a restricted
             integrable $\tau$-module. From the above discussion, we
             see that $W$ is a restricted integrable
             $\hat{\mathfrak{g}}_{M}$-module. Furthermore, from the definition of Lie homomorphism $\phi$ and the proof of Lemma 3.11 in \cite{R},
             it follows that $W$ is a restricted integrable
             $\hat{\mathfrak{g}}$-module. From \cite{DLM} (also see
the Theorem 2.4 of \cite{L1}), $W$ is a direct sum of irreducible
integrable highest $\hat{\mathfrak{g}}$-modules.
 Thus by using the Lemma 2.7 of \cite{L1} with $A_{1}=A_{2}=U(\tau)$, and from the
 above discussion and Remark \ref{rmk:4.1}, we get our conclusion.
\end{proof}

\begin{rmk} Consider the toroidal Lie algebra
$\tau^{'}={\mathfrak{g}}\otimes {\mathbb{C}}[t_{0}^{\pm
1},\dots,t_{r}^{\pm 1}]\oplus\sum_{i=1}^{r}\mathbb{C}K_{i}$ and define
a category $\mathcal{C}_{\tau'}$ accordingly.
 Using similar arguments, we can show that every irreducible integrable $\tau^{'}$-module in category
 $\mathcal{C}_{\tau'}$ is isomorphic to a module of the form
 $U_{1}(\underline{z}_{1})\otimes\cdots\otimes
U_{s}(\underline{z}_{s})$, where  $U_{1},\dots,U_{s}$ are finite
dimensional irreducible $\mathfrak{g}$-modules and
$\underline{z}_{i} =
(z_{0i},z_{1i},\dots,z_{ri})\in(\mathbb{C}^{*})^{r+1}$  for $1\le
i\le s$ are distinct $(r+1)$-tuples.
\end{rmk}

\end{document}